\documentclass[11pt]{article}
\usepackage{verbatim,complexity,charter,color,esvect,graphicx,hyperref, tikz}
\usepackage{dsfont,amsmath,amsthm,amssymb,mathrsfs,mathtools,xspace,fullpage, braket,cleveref}
\usepackage{thmtools} 
\usepackage{thm-restate}
\usepackage{pgf,tikz,ifthen,calc, tkz-euclide, qcircuit}

\usepackage{listings}

\definecolor{mintedbg}{RGB}{248,248,248}
\definecolor{mintedframe}{RGB}{220,220,220}
\definecolor{mintednum}{RGB}{128,128,128}
\definecolor{mintedkeyword}{RGB}{0,0,180}
\definecolor{mintedcomment}{RGB}{0,128,0}
\definecolor{mintedstring}{RGB}{160,32,240}

\lstdefinestyle{mintedstyle}{
  backgroundcolor=\color{mintedbg},
  basicstyle=\ttfamily\small,
  keywordstyle=\color{mintedkeyword}\bfseries,
  commentstyle=\color{mintedcomment}\itshape,
  stringstyle=\color{mintedstring},
  numberstyle=\tiny\color{mintednum},
  numbers=left,
  stepnumber=1,
  numbersep=8pt,
  frame=single,
  framerule=0.4pt,
  rulecolor=\color{mintedframe},
  breaklines=true,
  breakatwhitespace=false,
  columns=fullflexible,
  keepspaces=true,
  showstringspaces=false,
  tabsize=2,
  captionpos=b
}

\usepackage[T1]{fontenc}
\usepackage[dvipsnames]{xcolor}

\definecolor{darkGreen}{HTML}{006400}
\definecolor{myPurple}{HTML}{8B008B}
\definecolor{myOrange}{HTML}{FF8C00}

\definecolor{linkblue}{named}{MidnightBlue}
\hypersetup{colorlinks=true, linkcolor=linkblue, anchorcolor=linkblue,
  citecolor=linkblue, filecolor=linkblue, menucolor=linkblue,
  urlcolor=linkblue} 

\newtheorem{lemma}{Lemma}
\newtheorem{corollary}[lemma]{Corollary}
\newtheorem{theorem}[lemma]{Theorem}

\definecolor{darkblue}{rgb}{0,0,0.7} 
\definecolor{darkred}{rgb}{0.7,0,0} 
\definecolor{darkgreen}{rgb}{0,0.5,0} 

\newcommand{\defn}[1]{\textsl{\color{darkblue} #1}} 
\let\emph\relax 
\DeclareTextFontCommand{\emph}{\color{OliveGreen}\it}
\tikzset{MyNode/.style={circle, draw, inner sep=2,outer sep=0, fill=gray}}

\title{Biclique decompositions from Welzl orders}
\author{Jean Cardinal\thanks{Universit\'e libre de Bruxelles (ULB), Brussels, Belgium. \texttt{jean.cardinal@ulb.be}} 
\and Rose McCarty\thanks{Georgia Institute of Technology, Atlanta, Georgia, USA. \texttt{rmccarty3@gatech.edu}} 
\and Yelena Yuditsky\thanks{University of Leeds, Leeds, UK. \texttt{y.yuditsky@leeds.ac.uk}}}

\begin{document}
\maketitle
\sloppy

\begin{abstract}
    A biclique decomposition of a graph is a partition of its edges into complete bipartite subgraphs. We consider graphs whose vertices can be ordered such that the neighborhood of every vertex is the union of a sublinear number of intervals. We observe that these graphs admit compact representations in the form of biclique decompositions of small size. Here, the size of a decomposition is measured as the sum of the number of vertices of its bicliques. Combining this result with the existence of suitable vertex orderings for graphs of low neighborhood complexity, as proven by Welzl in 1988, we recover and extend several known results up to logarithmic factors. These results include upper bounds on the Zarankiewicz problem, matrix multiplication, quantum circuit complexity, and shortest path algorithms in ``well-structured'' instances.
\end{abstract}

\section{Introduction}

We consider the problem of finding biclique decompositions of graphs, which serve as convenient compact representations. A \defn{biclique} of a graph $G$ is a complete bipartite subgraph of $G$.
A \defn{biclique cover} of $G$ is a collection of bicliques of $G$ such that every edge of $G$ is contained in at least one of those bicliques. A \defn{biclique decomposition} is a biclique cover in which every edge is covered exactly once. 
We define the \defn{size} of a biclique cover $\mathcal{B}$ as the sum $\sum_{B\in\mathcal{B}} |V(B)|$ of the number of vertices of its bicliques. Equivalently, the size of a biclique cover is the sum over all vertices of $G$ of the number of bicliques containing that vertex. See~\Cref{fig:bicDecomp} for an example. 

\begin{figure}[t]
\centering
\begin{tikzpicture}[scale=.55, every node/.style={MyNode}]
    \def \width {2cm}
    \foreach \i in {1,2,3}{
        \foreach \j in {1,2,3}{
			\node (a\i\j) at (\i*\width, \j*\width) {};
		}
	}
    \foreach \i in {1,2,3}{
        \foreach \j in {1,2,3}{
			\draw[] (a1\i) -- (a2\j) {};
		}
	}
	\draw[] (a22) -- (a32) {};
    \draw[] (a32) -- (a31) {};
    \draw[] (a31) -- (a21) {};
    \draw[] (a21) -- (a22) {};
	\draw[] (a33) -- (a22) {};
    \draw[] (a33) -- (a12) {};
    \draw[] (a33) -- (a23) {};
    \draw[] (a33) -- (a32) {};
    \draw[draw=none] (-.5cm+\width, -.5cm+\width) --++ (2*\width+1cm, 0) --++ (0, 2*\width+1cm) --++ (-2*\width-1cm, 0) --++ (0, -2*\width-1cm);
\end{tikzpicture}\hskip .5cm
\begin{tikzpicture}[scale=.6, every node/.style={MyNode}]
    \def \width {2cm}
    \node[draw=none, fill=none] at (0, 0) {};
    \node[draw=none, fill=none] at (0, 2*\width) {};
    \node[rectangle, draw=none, fill=none] (center) at (0, \width) {\Large$=$};
\end{tikzpicture}\hskip .5cm
\begin{tikzpicture}[scale=.55, every node/.style={MyNode}]
    \def \width {2cm}
    \foreach \i in {1,2}{
        \foreach \j in {1,2,3}{
			\node (a\i\j) at (\i*\width, \j*\width) {};
		}
	}
        \foreach \j in {1,2,3}{
			\node[fill=white, circle, inner sep=1,outer sep=0] (a3\j) at (3*\width, \j*\width) {};
		}
    \foreach \i in {1,2,3}{
        \foreach \j in {1,2,3}{
			\draw[ultra thick, darkGreen] (a1\i) -- (a2\j) {};
		}
	}
    \draw[dashed] (-.5cm+\width, -.5cm+\width) --++ (2*\width+1cm, 0) --++ (0, 2*\width+1cm) --++ (-2*\width-1cm, 0) --++ (0, -2*\width-1cm);
\end{tikzpicture}\hskip .5cm
\begin{tikzpicture}[scale=.6, every node/.style={MyNode}]
    \def \width {2cm}
    \node[draw=none, fill=none] at (0, 0) {};
    \node[draw=none, fill=none] at (0, 2*\width) {};
    \node[rectangle, draw=none, fill=none] (center) at (0, \width) {\Large$\sqcup$};
\end{tikzpicture}\hskip .5cm
\begin{tikzpicture}[scale=.55, every node/.style={MyNode}]
    \def \width {2cm}
    \foreach \i in {1,2,3}{
        \foreach \j in {1,2,3}{
			\node[fill=white, circle, inner sep=1,outer sep=0] (a\i\j) at (\i*\width, \j*\width) {};
		}
	}
	\draw[ultra thick, myOrange] (a22) -- (a32) {};
    \draw[ultra thick, myOrange] (a32) -- (a31) {};
    \draw[ultra thick, myOrange] (a31) -- (a21) {};
    \draw[ultra thick, myOrange] (a21) -- (a22) {};
    \node at (a21) {};
    \node at (a31) {};
    \node at (a22) {};
    \node at (a32) {};
    \draw[dashed] (-.5cm+\width, -.5cm+\width) --++ (2*\width+1cm, 0) --++ (0, 2*\width+1cm) --++ (-2*\width-1cm, 0) --++ (0, -2*\width-1cm);
\end{tikzpicture}\hskip .5cm
\begin{tikzpicture}[scale=.6, every node/.style={MyNode}]
    \def \width {2cm}
    \node[draw=none, fill=none] at (0, 0) {};
    \node[draw=none, fill=none] at (0, 2*\width) {};
    \node[rectangle, draw=none, fill=none] (center) at (0, \width) {\Large$\sqcup$};
\end{tikzpicture}\hskip .5cm
\begin{tikzpicture}[scale=.55, every node/.style={MyNode}]
    \def \width {2cm}
    \foreach \i in {1,2,3}{
        \foreach \j in {1,2,3}{
			\node[fill=white, circle, inner sep=1,outer sep=0] (a\i\j) at (\i*\width, \j*\width) {};
		}
	}
	\draw[ultra thick, myPurple] (a33) -- (a22) {};
    \draw[ultra thick, myPurple] (a33) -- (a12) {};
    \draw[ultra thick, myPurple] (a33) -- (a23) {};
    \draw[ultra thick, myPurple] (a33) -- (a32) {};
    \node at (a33) {};
    \node at (a12) {};
    \node at (a22) {};
    \node at (a32) {};
    \node at (a23) {};
    \draw[dashed] (-.5cm+\width, -.5cm+\width) --++ (2*\width+1cm, 0) --++ (0, 2*\width+1cm) --++ (-2*\width-1cm, 0) --++ (0, -2*\width-1cm);
\end{tikzpicture}
\caption{A graph with a biclique decomposition into three bicliques of total size $6 + 4 + 5 = 15$. \label{fig:bicDecomp}}
\end{figure}
Our main result is that graphs of polynomial neighborhood complexity have biclique decompositions of small size. The \defn{neighborhood complexity} of a graph $G$ is the function $\eta_G$ of $m$ defined as the maximum number of distinct neighborhoods within a set of size $m$:
\[
\eta_G(m) = \max_{C\subseteq V(G), |C|=m} |\{ N(v)\cap C : v\in V(G) \}|.
\]
This function is also known as the \defn{shatter function} of the neighborhood set system. The notion of neighborhood complexity was inspired by the Sauer-Shelah Lemma~\cite{Sauer1972, Shelah1972}, which says that any set system of VC-dimension $d$ has shatter function $O(m^d)$. In general, graph classes with polynomial neighborhood complexity capture many ``well-structured'' graph classes.

We prove that such ``well-structured'' graph classes admit small biclique decompositions.

\begin{theorem}
\label{thm:main}
There is an $O(n^2\log n)$-time randomized algorithm that computes a biclique decomposition $\mathcal{B}$ of an $n$-vertex graph $G$ so that $\mathcal{B}$ has size:
    \begin{itemize}
        \item $O(n^{2-1/d}\log n)$ if $G$ has neighborhood complexity $\eta_G(m)\leq c m^d$ for fixed $c>0$ and $d>1$, 
        \item $O(n\log^2 n)$ if $G$ has neighborhood complexity $\eta_G(m)\leq c m$ for fixed $c>0$.
    \end{itemize}
\end{theorem}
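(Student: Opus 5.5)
The plan has two stages: first produce a good vertex ordering, then turn interval-structured neighborhoods into bicliques via a segment-tree decomposition.

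\textbf{Stage 1: a Welzl ordering.} I would apply Welzl's spanning-path theorem (1988) to the set system $\{N(v) : v\in V(G)\}$. The ground set is $V(G)$, so the ``path'' is an ordering of the vertices. The goal is an ordering $v_1,\dots,v_n$ with low total crossing number: summed over all $v$, the number of consecutive pairs $(v_i,v_{i+1})$ separated by $N(v)$ should be small. Equivalently, $\sum_v k_v$ is small, where $k_v$ is the number of maximal intervals composing $N(v)$ in the ordering.

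The dual shatter function is what matters here, and for neighborhood systems it equals the primal one by symmetry of adjacency. So Welzl's bound applies with exponent $d$. For $\eta_G(m)\le cm^d$ it gives crossing number $O(n^{1-1/d}\log n)$ per set, hence $\sum_v k_v = O(n^{2-1/d}\log n)$. For $d=1$ the bound becomes $O(\log n)$ per set, hence $\sum_v k_v = O(n\log n)$.

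For the algorithm I would use the multiplicative-weights randomized construction of the ordering. Each of the $O(n)$ rounds reweights the $n$ neighborhoods and picks a light pair. With care, using random sampling of candidate pairs or a spanning-tree-then-path conversion, this runs in $O(n^2\log n)$ time. This step is the main technical obstacle. Welzl's theorem gives a spanning tree, so I would take a DFS traversal to get a path, which at most doubles crossings. Getting the running time down to $O(n^2\log n)$ rather than something larger needs an efficient implementation of the reweighting, and I would cite or adapt known fast constructions.

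\textbf{Stage 2: from intervals to bicliques.} I would build a balanced segment tree over positions $1..n$, so each node $t$ corresponds to a canonical interval $I_t$. Each interval in $N(v)$ splits into $O(\log n)$ canonical intervals. For each node $t$, let $A_t$ be the set of vertices $v$ whose decomposition uses $I_t$. Then every edge $vu$ appears exactly once as a pair $(v,u)$ with $u\in I_t$, $v\in A_t$.

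The catch is that edges are unordered, so each edge is produced twice, once from each endpoint, and $A_t$ may intersect $I_t$. To get a decomposition I would orient by the ordering: keep only the pairs with $v$ earlier than $u$. Concretely, I would restrict $N(v)$ to positions after $v$. This adds at most one extra interval endpoint per vertex, and it makes each edge appear exactly once. It also makes the bicliques genuine, because a vertex $v$ lies strictly before every position it covers, so $A_t\cap I_t=\emptyset$. Hence each $(A_t, I_t)$ is a complete bipartite subgraph of $G$, and these subgraphs partition $E(G)$.

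The size is $\sum_t (|A_t|+|I_t|)$. Since each vertex lies in $O(\log n)$ canonical intervals, $\sum_t |I_t| = O(n\log n)$. Since each interval contributes $O(\log n)$ memberships, $\sum_t |A_t| = O(\log n)\sum_v (k_v+1)$. This yields $O(n^{2-1/d}\log^2 n)$, one logarithm too many.

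To remove the extra logarithm I would set the bound from Stage 1 more carefully. One option is to take the crossing bound as $O(n^{1-1/d})$ without a logarithm, as in Chazelle–Welzl-type refinements for $d>1$. Another is to replace the segment tree by a decomposition whose cost is linear in the number of intervals. For the $d>1$ case, I would charge each interval's $O(\log n)$ canonical pieces against the fact that only $O(n^{1-1/d})$ intervals per vertex arise without a log factor. For $d=1$ the count is $O(n\log n)\cdot O(\log n)=O(n\log^2 n)$, which matches the second bullet.

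The running time of Stage 2 is $O(n^2)$: reading all adjacencies to form the intervals and distribute them dominates.
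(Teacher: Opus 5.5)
Your Stage 2 is correct and is the paper's construction up to mirror image: dyadic intervals of a balanced tree over the ordering, each edge charged to its earlier endpoint, giving size $O(n\log n)+O(\log n)\sum_v (k_v+1)$. Your first option for removing the extra logarithm is also the right one. For $d>1$ one uses the log-free contiguity bound $O(n^{1-1/d})$ of Matou\v{s}ek, Welzl and Wernisch (via Haussler's packing lemma). For $d=1$ one uses the $O(\log n)$ bound of Bonnet, Duron, Sylvester and Zamaraev. The alternative of a decomposition linear in the number of intervals is not needed.

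\textbf{The gap is the running time, which is part of the statement.} You defer the $O(n^2\log n)$ bound to unspecified ``known fast constructions'' of the multiplicative-weights ordering, but you give none. The straightforward implementation is far slower: each of the $\Theta(n)$ reweighting rounds needs weighted Hamming distances between the current candidate pairs, which is about $n^3$ work per round naively. A fast procedure would also have to attain the log-free bound, not merely rely on the existence theorem.

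\textbf{The missing idea.} As your own size bound shows, only the total $\sum_v k_v$ matters, and for the total no reweighting is needed. Put weight $|N(u)\triangle N(w)|$ on each pair $uw$.
\begin{itemize}
\item For any ordering, the weight of the corresponding path equals the total number of breakpoints $\sum_v b(v)$, and $b(v)\le 2k_v$. Hence the minimum spanning tree has weight at most $2t(G)$, where $t(G)=\min_{\preceq}\sum_v k_v$.
\item Conversely, Hamming distance is a metric, so shortcutting a DFS traversal of a spanning tree of weight $W$ gives an ordering whose path has weight at most $2W$. Therefore $\sum_v k_v\le\sum_v (b(v)/2+1)\le W+n$.
\item A $2$-approximate Hamming MST of $n$ vectors in $\{0,1\}^n$ can be computed in $O(n^2\log n)$ time with high probability (Kowaluk, Lingas and Persson). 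This yields an ordering with $\sum_v k_v\le 4t(G)+n$.
\end{itemize}
Since $t(G)\le n\,\ell(G)$, the existential contiguity bounds transfer directly to this ordering, and the algorithm need not know $d$. Your $O(n^2)$ Stage 2 then completes the proof.
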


Theorem~\ref{thm:main} improves on a recent upper bound of $O(n^{2-1/(d+1)})$ due to Krapivin, Przybocki, Sanhueza-Matamala, and Subercaseaux~\cite[Theorem 39]{KPSS25}.
Our proof relies on the existence of \defn{Welzl orders}~\cite{MR1213461}, that is, orderings of the vertices of a graph so that the neighborhood of each vertex is the union of a sublinear number of intervals. We also use a recent refinement of Welzl's theorem for the case of linear shatter functions due to Bonnet, Duron, Sylvester, and Zamaraev~\cite{MR4868423}. The algorithm is primarily obtained by applying a minimum spanning tree algorithm from \cite{KLP26}; similar ideas have appeared in other papers on Welzl orders. So our main contribution is Lemma~\ref{lem:bipbc_from_cont}, which shows how to go from a Welzl order to a biclique decomposition. This approach -- using biclique decompositions -- unifies several foundational papers and several papers that have appeared over the last few years, as we explain next. 

Finally, we apply some known constructions for the Zarankiewicz problem~\cite{MR1417348, alon1999norm} to prove that Theorem~\ref{thm:main} is tight up to an $O(\log{n})$ factor for any integer $d$; see Theorem~\ref{thm:LB}.

\paragraph{Applications.}

The existence of biclique decompositions of small size has a number of interesting consequences, and Theorem~\ref{thm:main} unifies several known results. In particular, we obtain the following:
\begin{itemize}
    \item (Section~\ref{sec:zarankiewicz}) A simple proof of an upper bound on the Zarankiewicz problem for graphs of bounded VC-dimension due to Fox, Pach, Sheffer, Suk, and Zahl~\cite[Theorem 2.1]{MR3646875}, with an extra $\log{n}$ factor, but with a better dependence on the order of the forbidden~$K_{t,t}$. \item (Section~\ref{sec:multiplication}) A simple data structure for multiplying a binary matrix of VC-dimension $d$ with any vector, and for multiplying the adjacency matrix of a graph of linear neighborhood complexity with any matrix in near-quadratic time. These results recover and extend recent results of Anand, van den Brand, and McCarty~\cite{AvdBM25}, and of Kozma and Opler~\cite{KO26}.
    \item (Section~\ref{sec:quantumcircuit}) A new upper bound of $O(n^{2-1/d}\log n)$ on the size of a stabilizer circuit constructing a graph state $\ket{G}$ of VC-dimension $d$. This new connection between quantum circuit complexity and biclique decompositions yields different proofs of upper-bounds of Patel, Markov, and Hayes~\cite{optimalSynthesis}, Davies and Jena~\cite{DJ25}, and Kumabe, Mori, and Yoshimura~\cite{KMY25}. In the latter two cases we have extra $\log{n}$ factors.
    \item (Section~\ref{sec:apsp}) An $O(n^2\log^2 n)$ all-pairs shortest paths algorithm in graphs of linear neighborhood complexity, including graphs of bounded twin-width or merge-width, without needing any contraction or construction sequence. This generalizes and simplifies recent results of Bonnet, Geniet, Kim and Moon~\cite{BGKM26}.
    \end{itemize}

\paragraph{Graph classes with linear neighborhood complexity.} We now discuss a rich family of examples of graphs with linear or almost linear neighborhood complexity. These examples arise from a recent theory of width parameters for dense graphs which has been developed by various authors in an attempt to capture the computational complexity of first-order model-checking.

Twin-width, for instance, was introduced by Bonnet, Kim, Thomass\'e, and Watrigant~\cite{BKTW21}, and studied in an ongoing series of papers~\cite{BGKTW22,BGKTW24,BGMSTT24}.
Flip-width was introduced by Toru\'{n}czyk~\cite{T23}, and merge-width was introduced by Dreier and Toru\'{n}czyk~\cite{DT25}. Overall, these parameters are becoming more general in an attempt to capture broader classes of graphs which admit efficient algorithms for first-order model-checking. Every class of bounded twin-width has bounded merge-width, every class of bounded merge-width has bounded flip-width, and every class of bounded flip-width is monadically dependent (roughly, this means that the class has nice model-theoretic properties in the sense of Shelah~\cite{Shelah1982Book}).

It was shown recently that classes of graphs for which twin-width, merge-width, or flip-width is bounded have linear neighborhood complexity~\cite{MR4623907,BG25}. For monadically dependent graph classes, a proof has been announced that the neighborhood complexity is $m^{1+o(1)}$ where the $o(1)$ terms goes to zero as $m$ goes to infinity~\cite{monDep26}. We therefore directly obtain the following corollary of Theorem~\ref{thm:main}.

\begin{corollary}
\label{cor:width}
For any graph class of bounded twin-width, flip-width, or merge-width, every $n$-vertex graph in the class has a biclique decomposition of size $O(n\log^2 n)$. For any monadically dependent graph class, every $n$-vertex graph in the class has a biclique decomposition of size $n^{1+o(1)}$.
\end{corollary}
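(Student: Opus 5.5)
The corollary follows by plugging known neighborhood-complexity bounds into Theorem~\ref{thm:main}. The only subtleties are that the theorem is stated for neighborhood complexity bounded by $cm^d$ with \emph{fixed} constants, and that the monadically dependent case has only an $m^{1+o(1)}$ bound.

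\textbf{Bounded twin-width, flip-width, or merge-width.} Fix such a class $\mathcal{C}$. By the results cited above~\cite{MR4623907,BG25}, there is a constant $c=c(\mathcal{C})$ with $\eta_G(m)\le cm$ for every $G\in\mathcal{C}$ and every $m$. Two containments keep this simple:
\begin{itemize}
\item bounded twin-width implies bounded merge-width, which implies bounded flip-width;
\item boundedness of any of these parameters is preserved under taking induced subgraphs, so the bound holds uniformly over the class.
\end{itemize}
It therefore suffices to invoke the linear bound for bounded flip-width. The second item of Theorem~\ref{thm:main} then applies with this fixed $c$, giving a biclique decomposition of size $O(n\log^2 n)$. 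The hidden constant depends only on $c$, hence only on $\mathcal{C}$.

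\textbf{Monadically dependent classes.} By~\cite{monDep26}, for every $\varepsilon>0$ there is $c_\varepsilon$ such that $\eta_G(m)\le c_\varepsilon m^{1+\varepsilon}$ for all $G$ in the class and all $m$. This is the reformulation of ``$m^{1+o(1)}$'' that I will take as the precise statement; for small $m$, the constant $c_\varepsilon$ absorbs everything. Given $\varepsilon\in(0,1)$, set $d=1+\varepsilon>1$ and apply the first item of Theorem~\ref{thm:main}. This gives size $O_\varepsilon(n^{2-1/(1+\varepsilon)}\log n)$. Since $2-1/(1+\varepsilon)=1+\varepsilon/(1+\varepsilon)\le 1+\varepsilon$, we get $O_\varepsilon(n^{1+\varepsilon}\log n)$.

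To turn this into $n^{1+o(1)}$, let $f(n)$ be the minimum size of a decomposition over $n$-vertex graphs in the class. For each fixed $\varepsilon$ we have $f(n)\le C_\varepsilon n^{1+2\varepsilon}$ for all large $n$. A standard diagonalization, choosing $\varepsilon(n)\to 0$ slowly enough that $C_{\varepsilon(n)}\le n^{\varepsilon(n)}$, then yields $f(n)\le n^{1+o(1)}$.

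\textbf{Main obstacle.} The only delicate point is the quantifier order in the monadically dependent case: Theorem~\ref{thm:main} needs a fixed exponent $d$, so we cannot feed it an $o(1)$ term directly. Applying it once for each fixed $\varepsilon$ and then diagonalizing resolves this. The fact that Theorem~\ref{thm:main} requires $d>1$ strictly is harmless here, since $d=1+\varepsilon>1$.
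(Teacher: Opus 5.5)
Your proposal is correct and follows the paper's route. The paper simply plugs the linear neighborhood-complexity bounds into the second item of Theorem~\ref{thm:main}, and the $m^{1+o(1)}$ bound into the first item with $d=1+\varepsilon$; your handling of the quantifiers is a faithful expansion of what the paper leaves implicit. One wording fix: $f(n)$ should be the worst case over $n$-vertex graphs in the class of the minimum decomposition size, not a minimum over graphs. Also, the diagonalization is unnecessary: it suffices to note that $\limsup_{n}\log f(n)/\log n\le 1$.
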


\noindent For the case of twin-width, this answers a question of the first and last authors~\cite{CY25}. 

We note one more type of graph class in the nearly-linear regime. It was recently proven that every hereditary small class (i.e. every graph class which is closed under taking induced subgraphs and has at most $n! c^n$-many labeled $n$-vertex graphs) admits vertex-orderings so that the neighborhood of each vertex is the union of $O(\log^2 n)$-many intervals~\cite{MR4868423}. Thus our Lemma~\ref{lem:bipbc_from_cont} shows that the $n$-vertex graphs in such classes have biclique decompositions of size $O(n\log^3 n)$.

\paragraph{Historical remarks.}
We conclude this introduction with a brief history of biclique covers.
Biclique decompositions were studied by Chung, Erd\H{o}s, and Spencer~\cite{MR820214}, who showed that every graph on $n$ vertices has a biclique decomposition of size $O(n^2/\log n)$. The tight asymptotic upper bound $n^2/(2\log_2 n)$ was proved recently by Krapivin, Przybocki, Sanhueza-Matamala, and Subercaseaux~\cite{KPSS25}. Erd\H{o}s and Pyber~\cite{MR1452952} proved that there always exists a biclique decomposition such that every vertex is contained in $O(n/\log n)$ bicliques. 
The minimum such number for a given graph is called the \defn{bipartite degree} of the graph by Fishburn and Hammer~\cite{MR1417566}. 
Krapivin et al.~\cite{KPSS25} pointed out that the problem was also studied much earlier by soviet scientists, including Lupanov~\cite{L56} and Nechiporuk~\cite{N69}, in the context of circuit complexity. 
Biclique decompositions occur naturally in so-called \defn{confluent} drawings of graphs~\cite{DEGM05}, in which edges of bicliques are drawn as a bundle. Biclique decompositions have also been shown useful in compressing web graphs~\cite{BC08,KCA09}.

Biclique decompositions appear in many guises in computational geometry.
Agarwal, Alon, Aronov, and Suri~\cite{MR1298916}, for instance, showed that visibility graphs of polygons admit compact representations in term of near-linear size biclique decompositions.
In a recent paper, two of the authors gave constructions of small-size biclique covers for semilinear graphs and terrain visibility graphs~\cite{CY25}.
It is also known from results of Do~\cite{MR4013919} and Agarwal, Aronov, Ezra, and Sharir~\cite{AAEKS25} that $d$-dimensional semialgebraic graphs admit biclique decompositions of size $O(n^{2-2/(d+1)+\varepsilon})$ for any $\varepsilon >0$. 

In a seminal paper, Feder and Motwani~\cite{FM95} observed that biclique covers of small size allowed improved agorithms for classical problems on graphs, including single-source shortest paths and maximum matching. This has also been used recently for the design of efficient geometric matching algorithms by Cabello, Cheng, Cheong, and Knauer~\cite{CCCK24}. 

There are other applications of biclique decompositions that we do not elaborate on, including data structures for independence and cut queries described by Krapivin et al.~\cite{KPSS25}, the construction of graph spanners, as observed by Conroy and T\'{o}th~\cite{CT22}, and adjacency labeling schemes, as described by Sharir and the first author~\cite{CS26}.

\section{Compact representations from small neighborhood complexity}
\label{sec:main}

In this section, we first give an upper bound on the size of a biclique decomposition as a function of a graph parameter called the \defn{contiguity}. We then proceed by recalling known results on the relation between neighborhood complexity and contiguity. Finally, we consider the problem of computing orderings that approximately realize the contiguity of a graph. We conclude by giving an efficient construction algorithm for our biclique decompositions.

\subsection{Biclique decompositions of graphs with bounded contiguity}

Let $G=(V,E)$ be a simple graph. Given a total ordering $\preceq$ on $V$ and a vertex $v\in V$, 
we let $\ell_{\preceq}(v)$ be the 
number of inclusion-wise maximal intervals in $\preceq$ such that the neighborhood of $v$ is the union of those intervals. Note that we do not consider a vertex to be in its own neighborhood.
The \defn{contiguity} of a graph $G$ is
\[
\ell(G) = \min_{\preceq} \max_v \ell_{\preceq}(v).
\]
We refer the reader to \cite[Section 2]{MR4868423} for a short history of the notion of contiguity of a graph, which stems from crossing numbers of paths in Welzl~\cite{MR1213461}.

We prove that graphs of small contiguity have biclique decompositions of small size. Note that if $\mathcal{B}$ is a biclique decomposition of an $n$-vertex graph so that every vertex is covered at most $k$ times, then the size of $\mathcal{B}$ is at most $nk$. Thus the following key lemma bounds the size of~$\mathcal{B}$. 

\begin{figure}
\centering
\begin{tikzpicture}
	\pgfmathtruncatemacro\scx{4}
	\pgfmathtruncatemacro\depth{4}
	\tkzDefPoint(0, 0){p_{0,0}}
	\tkzDefPoint(-\scx,-0.5){p_{1,-1}}
	\tkzDefPoint(\scx,-0.5){p_{1,0}}
	\tkzDrawSegment(p_{0,0},p_{1,-1})
	\tkzDrawSegment(p_{0,0},p_{1,0})
	\tkzDrawPoints(p_{0,0}, p_{1,-1}, p_{1,0})
	\foreach \i in {2, 3, ..., \depth}{
		\pgfmathtruncatemacro\a{2^(\i - 1)}
		\pgfmathtruncatemacro\b{2^(\i - 1) - 1}
		\foreach \j in {-\a, ..., \b}{
			\tkzDefPoint(\scx*(2*\j + 1)/\a,-\i/2){p_{\i,\j}}
			\pgfmathtruncatemacro\c{floor(\j/2)}
			\pgfmathtruncatemacro\d{\i - 1}
			\tkzDrawSegment(p_{\i,\j}, p_{\d,\c})
			\tkzDrawPoint(p_{\i,\j})
		}
	}
    \def \start {-5.5}
    \def \width {5}
    \def \thick {10pt}
    \draw[line width=\thick, color=red, opacity=.5] (\start-.2,-2) -- (\start+\width+.2,-2);
    \draw[ultra thick] (p_{4,-6}) -- (p_{3,-3});
    \draw[ultra thick] (p_{4,-5}) -- (p_{3,-3});
    \node[fill=black, circle, inner sep=1.5,outer sep=0] at (p_{3,-3}) {};
    \draw[ultra thick] (p_{4,-4}) -- (p_{3,-2});
    \draw[ultra thick] (p_{4,-3}) -- (p_{3,-2});
    \node[fill=black, circle, inner sep=1.5,outer sep=0] at (p_{3,-2}) {};
    \draw[ultra thick] (p_{4,-2}) -- (p_{3,-1});
    \draw[ultra thick] (p_{4,-1}) -- (p_{3,-1});
    \node[fill=black, circle, inner sep=1.5,outer sep=0] at (p_{3,-1}) {};
    \draw[ultra thick] (p_{3,-1}) -- (p_{2,-1});
    \draw[ultra thick] (p_{3,-2}) -- (p_{2,-1});
    \node[fill=black, circle, inner sep=1.5,outer sep=0] at (p_{2,-1}) {};
    \foreach \j in {-8, ..., 7}{
        \pgfmathtruncatemacro\l{\j + 9}
        \node[MyNode, label={[yshift=-.2cm, below]{\large$v_{\l}$}}] at (p_{4,\j}) {};
    }
\end{tikzpicture}
\caption{The tree $T$ and the two subtrees used to obtain the interval between $v_3$ and $v_8$.}
\label{fig:contiguity}
\end{figure}

\begin{lemma}
\label{lem:bipbc_from_cont}
    Any $n$-vertex graph $G$ with contiguity $\ell$ has a biclique decomposition with at most $2n$ bicliques such that every vertex belongs to at most $(\ell +1)\lceil \log_2 n \rceil$ bicliques.
\end{lemma}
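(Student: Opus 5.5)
Fix an ordering $\preceq$ realizing the contiguity, $v_1\preceq\dots\preceq v_n$, and build a complete binary tree $T$ (a segment tree) whose leaves, left to right, are $v_1,\dots,v_n$, padding to $2^{\lceil\log_2 n\rceil}$ leaves if needed. Each node $t$ of $T$ corresponds to the interval $I_t$ of leaves below it. There are fewer than $2n$ nodes, as in \Cref{fig:contiguity}. The standard canonical decomposition says that every interval $[v_i,v_j]$ is the disjoint union of the $I_t$ over at most two nodes per level, and the nodes used lie along the two root-to-leaf paths to $v_{i}$ and $v_{j}$.

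To make a decomposition rather than a cover, I orient each edge so it is counted once. For each vertex $u$, let $N^+(u)=\{w\in N(u): u\prec w\}$ be its neighbours later in the order. Since $N(u)$ is a union of $\ell$ intervals, $N^+(u)$ is a union of at most $\ell+1$ intervals: removing the point $u$ splits at most one interval. (If $u$ is not inside an interval, the count stays at most $\ell$.) Decompose each of these intervals canonically into tree nodes. This gives a set $S(u)$ of nodes whose intervals $I_t$ partition $N^+(u)$.

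For each node $t$, define the biclique $B_t$ with sides $A_t=\{u: t\in S(u)\}$ and $I_t$. Every $u\in A_t$ is adjacent to all of $I_t$, and $u\prec I_t$, so $A_t\cap I_t=\emptyset$ and $B_t$ is a genuine complete bipartite subgraph. Each edge $uw$ with $u\prec w$ lies in exactly one $B_t$: the unique $t\in S(u)$ with $w\in I_t$. It cannot also be covered with roles swapped, because $u\notin N^+(w)$. So we get at most $2n$ bicliques, after discarding those with empty $A_t$.

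It remains to bound how many bicliques contain a vertex $v$, and this counting is where care is needed. The vertex $v$ is in $B_t$ as a right-side vertex only when $t$ is an ancestor of the leaf $v$, which happens for at most $\lceil\log_2 n\rceil+1$ nodes. It is in $B_t$ as a left-side vertex for $t\in S(v)$, which is at most $(\ell+1)$ times the number of canonical pieces per interval. A naive count gives $2\lceil\log_2 n\rceil$ pieces per interval, which is too many. To reach $(\ell+1)\lceil\log_2 n\rceil$ overall, I would count the canonical pieces per interval endpoint. An interval $[a,b]$ uses at most one node per level hanging off the path to $a$, and one per level hanging off the path to $b$. Charging nodes to the interval boundaries, of which $N^+(v)$ has at most $2(\ell+1)$, I expect to absorb the ancestor term into the same bound. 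Alternatively, the right-side term can be absorbed by noting that the top levels contribute only once. If the exact constant does not come out, I would restrict the levels, for example by omitting the root, or tighten the interval count to $\ell$ intervals plus one split. The main obstacle is this bookkeeping that gives exactly $(\ell+1)\lceil\log_2 n\rceil$; the structure of the construction itself is straightforward.
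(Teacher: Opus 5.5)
Your construction is the paper's up to mirror image: you charge each edge to its earlier endpoint and split $N^+(u)$, while the paper splits left-neighborhoods. Your argument that the $B_t$ partition $E(G)$ is fine. The gap is the final count. You never prove $(\ell+1)\lceil\log_2 n\rceil$, and none of the repairs you list can work, because this construction does not meet that bound for an arbitrary optimal ordering.

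The paper closes this step by asserting that each level of $T$ contains at most one maximal dyadic interval inside a given interval $I$, so that $I$ needs at most $\lceil\log_2 n\rceil$ pieces. That assertion is false. A level can contain one maximal block that is a right child (forced by the left endpoint of $I$) and one that is a left child (forced by the right endpoint). For $n=8$, $[v_2,v_7]=\{v_2\}\cup[v_3,v_4]\cup[v_5,v_6]\cup\{v_7\}$. Your ``naive'' count is the true one, and it matters:
\begin{itemize}
\item Take $n=2^k$, make $v_1$ adjacent exactly to $[v_4,v_{n/2-1}]\cup[v_{n/2+2},v_{n-1}]$, and put a $5$-cycle on the remaining vertices $v_2,v_3,v_{n/2},v_{n/2+1},v_n$.
\item The $5$-cycle forces contiguity at least $2$: its five neighborhood pairs themselves form a $5$-cycle, so they cannot all be consecutive in a linear order. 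This ordering achieves $2$, so it is optimal.
\item Yet $|S(v_1)|=(2k-5)+(2k-4)=4k-9$, which exceeds $3k=(\ell+1)\lceil\log_2 n\rceil$ once $k\ge 10$.
\end{itemize}
Reversing the order defeats the paper's left-neighborhood version in the same way. This does not refute the lemma, since that graph has better orderings. It does show that neither your argument nor the paper's delivers the constant $\ell+1$.

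What your construction does give is $2\ell\lceil\log_2 n\rceil$. That is all the later $O(\cdot)$ results use (Theorems~\ref{thm:main} and~\ref{thm:bc_from_avg_cont} and the applications). Two observations make the count clean:
\begin{itemize}
\item Since $u\notin N(u)$ and intervals are taken in the order on all of $V$, no maximal interval of $N(u)$ contains $u$. So $N^+(u)$ is a union of at most $\ell$ intervals, not $\ell+1$.
\item If $v$ lies on the $I_t$ side of a nonempty $B_t$, some earlier vertex is adjacent to $v$. So at most $\ell-1$ intervals of $N(v)$ lie after $v$.
\end{itemize}
Let $k=\lceil\log_2 n\rceil$. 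Blocks containing $v_1$ are never used, since $v_1\notin N^+(u)$ for every $u$. So an interval of some $N^+(u)$ has at most two pieces on each of the levels $0,\dots,k-1$ and none at the root, and at most $k$ ancestors of $v$ are usable. Hence $v$ lies in at most $\max\{2k\ell,\,2k(\ell-1)+k\}=2k\ell$ bicliques.

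The same remark about $v_1$ repairs your node count. With padding, the tree can have nearly $4n$ nodes. However, only nodes whose blocks consist of real vertices and avoid $v_1$ can ever be used, and there are fewer than $2n$ of those.
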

\begin{proof}
  Let $V$ denote the vertex set of $G$, and consider the optimal ordering $\preceq$ on $V$. Thus the neighborhood of every vertex consists of at most $\ell$ intervals in this ordering. We construct a balanced binary tree $T$ on top of $\preceq$, the leaves of which are the vertices of $G$, and the nodes of which correspond to dyadic intervals in $\preceq$. See Figure~\ref{fig:contiguity} for a depiction. For a node $t$ of $T$, we denote by $L(t) \subseteq V$ the set of leaves of the subtree of $T$ rooted at $t$. This is the corresponding dyadic interval. 

  Observe now that every interval $I$ in $\preceq$ is the union of $\lceil \log_2 n \rceil$ disjoint dyadic intervals $L(t)$. To see this, consider the maximal dyadic intervals contained in $I$. Since none of the corresponding nodes is an ancestor of another, these dyadic intervals are disjoint. Moreover, for each level of $T$, there is at most one node $t$ on that level so that $L(t)$ is a maximal dyadic interval contained in $I$. Hence the neighborhood of every vertex $v\in V$ is the union of $\ell \lceil \log_2 n \rceil$ disjoint dyadic intervals. 

  We define the \defn{left-neighborhood} of a vertex $v$ as the set of all neighbors $u$ of $v$ which are to the left of $v$ in the order (that is, with $u \preceq v$).
  Clearly, the left-neighborhood of every vertex $v\in V$ is also the union of $\ell \lceil \log_2 n \rceil$ disjoint dyadic intervals.
  Now, for every dyadic interval $L(t)$, we put a biclique into our cover where the left side of the biclique is $L(t)$ and the right side of the biclique is all the vertices $v$ so that $v$ is to the right of $L(t)$ in $\preceq$, and $L(t)$ is one of the dyadic intervals used to obtain the left-neighborhood of $v$. This forms a biclique decomposition since the left-neighborhood of every vertex is partitioned by the corresponding dyadic intervals. 
  
  The number of bicliques we have is at most the number of nodes of the tree, which is at most $2n$. So it just remains to consider how many bicliques a vertex $v \in V$ belongs to. We already argued that $v$ is on the right side of at most $\ell \lceil \log_2 n \rceil$ bicliques. Moreover, $v$ is on the left side of at most $\lceil \log_2 n \rceil$ bicliques since at most $\lceil \log_2 n \rceil$ dyadic intervals contain~$v$.
\end{proof}

In most applications of biclique decompositions, only the size of the decomposition matters.
We now define a new variant of contiguity which captures this intuition. Given an $n$-vertex graph $G=(V,E)$, its \defn{total contiguity} is
\[
t(G) = \min_{\preceq} \sum_{v \in V} \ell_{\preceq}(v),
\]
where $\preceq$ and $\ell_{\preceq}(v)$ are defined as before. The following result is now straightforward.

\begin{theorem}
\label{thm:bc_from_avg_cont}
  Any $n$-vertex graph $G$ with total contiguity $t$ has a biclique decomposition of size $O(t \log n )$.
\end{theorem}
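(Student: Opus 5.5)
We rerun the construction from the proof of Lemma~\ref{lem:bipbc_from_cont}, this time with an ordering $\preceq$ that attains the total contiguity, i.e.\ $\sum_{v\in V}\ell_{\preceq}(v)=t$. The difference is in the accounting: we bound the size of the decomposition by summing over vertices, rather than bounding the maximum number of bicliques containing a single vertex.

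As before, build the balanced binary tree $T$ over $\preceq$, whose nodes correspond to dyadic intervals $L(t')$. Each interval in $\preceq$ is a disjoint union of at most $2\lceil\log_2 n\rceil$ maximal dyadic intervals; the factor $2$ is harmless here, since at most two such intervals occur per level. The neighborhood of $v$ is a union of $\ell_{\preceq}(v)$ intervals. Intersecting with the set of vertices to the left of $v$ keeps this a union of at most $\ell_{\preceq}(v)$ intervals, because truncating at $v$ only removes a suffix and cannot increase the number of intervals. Hence the left-neighborhood of $v$ is a disjoint union of at most $2\ell_{\preceq}(v)\lceil\log_2 n\rceil$ dyadic intervals. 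For each node $t'$ we form one biclique with left side $L(t')$ and right side the set of vertices $v$ lying to the right of $L(t')$ whose left-neighborhood decomposition uses $L(t')$. Exactly as in Lemma~\ref{lem:bipbc_from_cont}, this is a biclique decomposition: each edge $uv$ with $u\prec v$ is covered exactly once, namely by the unique dyadic piece of $v$'s left-neighborhood that contains $u$. Bicliques with an empty right side are discarded.

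The size equals $\sum_{v}(\#\text{bicliques containing } v)$. Vertex $v$ lies on the right side of at most $2\ell_{\preceq}(v)\lceil\log_2 n\rceil$ bicliques, which contributes $O(t\log n)$ in total. Vertex $v$ lies on the left side of at most $\lceil\log_2 n\rceil+1$ bicliques, one for each ancestor in $T$, which contributes $O(n\log n)$ in total.

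The only real point is absorbing this $O(n\log n)$ term into $O(t\log n)$. If $G$ has no isolated vertices, then $\ell_{\preceq}(v)\ge 1$ for every $v$, so $t\ge n$ and we are done. To handle isolated vertices, the plan is to delete them first: they lie in no edge, and deleting a vertex from an ordering cannot increase the number of maximal intervals in any neighborhood, so the total contiguity does not increase. We then apply the argument to the remaining $n'\le n$ vertices, where $t\ge n'$. An alternative is to charge left-side memberships only to bicliques with nonempty right side. Either way, the decomposition has size $O(t\log n)$.
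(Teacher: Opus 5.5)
Your proposal is correct and follows the paper's proof. Both rerun the dyadic-tree construction of Lemma~\ref{lem:bipbc_from_cont} with an ordering attaining $t$, charge $O(\ell_{\preceq}(v)\log n)$ right-side and $O(\log n)$ left-side memberships to each vertex, and absorb the $O(n\log n)$ term via $t\ge n$ once isolated vertices are removed. You are more careful than the paper's two-line argument: you justify why deleting isolated vertices does not increase the total contiguity, and you use the correct count of at most two maximal dyadic intervals per level.
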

\begin{proof}
    We may assume that $G$ has no isolated vertices and thus the total contiguity is at least $n$.
    Now, one can simply adapt the proof of Lemma~\ref{lem:bipbc_from_cont} by remarking that the sum of the sizes of the bicliques is at most a constant times $\sum_{v} \ell_{\preceq}(v) \log n$, which is $O(t\log n)$, as claimed.
\end{proof}

\paragraph{Remarks.}
The idea of constructing a balanced binary tree on top of the Welzl order is reminiscent of the range-searching data structure in Welzl’s original 1988 paper~\cite[Lemma~3.2]{MR1213461} and in Chazelle and Welzl’s paper~\cite[Lemma~3.1]{MR1014739}.
We can also relate our construction to signed tree models as defined by Bonnet et al.~\cite{MR4802148}. In their terminology, we prove that graphs on $n$ vertices with contiguity $\ell$ have balanced positive tree models with degeneracy $O(\ell\log n)$.

\subsection{Bounded contiguity for graphs with small neighborhood complexity}

It is known from the seminal results of Welzl~\cite{MR1213461} that the elements of set systems of bounded VC-dimension can be linearly ordered so that each set is the union of a sublinear number of intervals in this ordering. We refer to these orders as \defn{Welzl orders}. When applied to neighborhood set systems of graphs, we can bound the contiguity of graphs of bounded VC-dimension, and apply our Lemma~\ref{lem:bipbc_from_cont}. In fact these tools also apply in the broader context of graphs of polynomial neighborhood complexity, as we now describe.

The following lemma was initially proven by Welzl~\cite{MR1213461} within a logarithmic factor; see also Chazelle and Welzl~\cite{MR1014739}. The current formulation was observed by Matou\v{s}ek, Welzl, and Wernisch~\cite{MR1262921}
using results of Haussler~\cite{MR1313896}.

\begin{theorem}[Welzl orders~\cite{MR1213461,MR1262921}]
  \label{thm:VCalt}
  For any real numbers $c>0$ and $d>1$, there exists $c'=c'(c,d)$ such that the following holds.
  Any $n$-vertex graph $G$ with neighborhood complexity $\eta_G(m)\leq c\cdot m^d$ has contiguity at most $c'\cdot n^{1 - 1/d}$.
\end{theorem}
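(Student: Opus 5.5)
We follow the classical Welzl-type argument, recast in terms of spanning paths with low crossing number. Consider the set system $\mathcal{N}=\{N(v): v\in V\}$ on ground set $V$. For an ordering $\preceq$ of $V$, write $v_1,\dots,v_n$ for the vertices in order. The number of maximal intervals of $N(u)$ is at most one plus the number of consecutive pairs $\{v_i,v_{i+1}\}$ that are \emph{crossed} by $N(u)$, that is, pairs with exactly one endpoint in $N(u)$. So it suffices to build a Hamiltonian path on $V$ such that every neighborhood crosses $O(n^{1-1/d})$ of its edges. We will instead build a spanning tree with that crossing property. Doubling the tree (an Euler tour) and shortcutting repeated vertices gives a path. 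Each shortcut edge $\{a,b\}$ is crossed by a set $S$ only if $S$ crosses some tree edge on the tree path it replaces. Each tree edge is traversed twice in the tour, so the crossing number of the path is at most twice that of the tree.

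To build the tree we use iterative reweighting, following Welzl and Chazelle--Welzl. Maintain integer weights $w(S)$ on the sets $S\in\mathcal{N}$, all starting at $1$. Build the tree greedily with $n-1$ edges. At step $i$, with $n-i+1$ components remaining (or, more simply, $n-i+1$ vertices still available in a forest-growing scheme), find a pair $x,y$ in distinct components whose crossing weight $\sum_{S\text{ crosses }\{x,y\}} w(S)$ is small. Add the edge $\{x,y\}$, then double the weight of every set crossing it.

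The key packing step is to show that among $k$ points there is a pair crossed by at most an $O(k^{-1/d})$ fraction of the total weight $W$. This is exactly where the shatter bound $\eta_G(m)\le c m^d$ and Haussler's packing lemma enter. Haussler's lemma says that a set system with primal shatter function $O(m^d)$ on a weighted ground set has at most $O((1/\delta)^{d})$ sets that are pairwise $\delta$-far in symmetric-difference measure. We apply it to the dual system: points $x$ become the sets $\{S: x\in S\}$, and the distance between $x$ and $y$ is the weight of sets crossing $\{x,y\}$. The shatter function of this dual system is the number of distinct traces $N(v)\cap C$, which is bounded by $\eta_G$. This is exactly why the hypothesis is phrased via neighborhood complexity, and the symmetric roles of vertices and neighborhoods make it legitimate here. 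So if all $k$ points were pairwise more than $\delta W$ apart with $\delta = C k^{-1/d}$, we would contradict the packing bound. Hence some pair has crossing weight at most $O(k^{-1/d})W$.

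Now the standard accounting. Each step multiplies the total weight by at most $1+O((n-i+1)^{-1/d})$, so the final weight satisfies
\[W_{\mathrm{final}}\le n\prod_{k=2}^{n}\bigl(1+O(k^{-1/d})\bigr)\le n\exp\bigl(O(n^{1-1/d})\bigr).\]
A set crossing $\kappa$ tree edges ends with weight $2^{\kappa}\le W_{\mathrm{final}}$. This gives $\kappa=O(n^{1-1/d}+\log n)=O(n^{1-1/d})$ since $d>1$, with the constant $c'$ depending only on $c$ and $d$. Adding $1$ for the interval count and the factor $2$ from the path conversion yields the claim. The main obstacle is the packing step: transferring Haussler's packing lemma from counting measure to arbitrary weights on the dual system, via weighted multiset replication, and checking that the dual shatter function is indeed controlled by $\eta_G$. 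Without Haussler's lemma, a cruder $\varepsilon$-net argument would lose the $\log n$ factor that Welzl originally had.
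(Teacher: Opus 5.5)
Your argument is correct and is the classical proof that the paper cites rather than reproves. It builds a low-crossing spanning tree by Welzl's iterative reweighting and converts it to an ordering by shortcutting a doubled tour; the $O(k^{-1/d})$ step comes from Haussler's packing lemma in the shatter-function form observed by Matou\v{s}ek, Welzl, and Wernisch, applied to the dual system (cf.\ the reweighting analysis in Matou\v{s}ek's book that the paper points to). Your observation that, by symmetry of adjacency, the dual of the neighborhood system is the neighborhood system itself, so its shatter function is exactly $\eta_G$, is precisely why the hypothesis suffices. The tree-to-path shortcutting is the same step as in the paper's Lemma~\ref{lem:mstlb}.
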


The following theorem is an extension of the result of Welzl~\cite{MR1213461} to the case that $d=1$. Details are worked out in Bonnet, Duron, Sylvester, and Zamaraev~\cite[Theorem 20]{MR4868423}. 

\begin{theorem}[\cite{MR4868423}]
  \label{thm:widthcont}
  Graph classes with linear neighborhood complexity have contiguity $O(\log n)$.
\end{theorem}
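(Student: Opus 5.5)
The plan is to follow Welzl's iterative reweighting (multiplicative weights) argument for spanning paths with low crossing number, adapted to the case $d=1$ as in Bonnet, Duron, Sylvester, and Zamaraev. Take a graph $G$ from a class with $\eta_G(m)\le cm$. Consider the set system $\mathcal{N}=\{N(v): v\in V\}$ on ground set $V$. An ordering $\preceq$ of $V$ is the same as a Hamiltonian path on $V$. The number of maximal intervals of $N(u)$ is then at most one plus the number of path edges \emph{crossed} by $N(u)$, that is, path edges with exactly one endpoint in $N(u)$. So it suffices to build a spanning path on $V$ in which every set of $\mathcal{N}$ crosses $O(\log n)$ edges. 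In fact it is enough to build a spanning tree with this property, because an Euler tour of the tree, shortcut to a path, at most doubles crossing numbers.

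The first step is a packing lemma for the shatter function. For any subset $W\subseteq V$ and any weights $w$ on $\mathcal{N}$ of total $W_{\rm tot}$, I claim there are two distinct vertices $x,y\in W$ such that the total weight of sets separating $x$ and $y$ is $O(W_{\rm tot}/|W|)$. This follows from Haussler's packing lemma, which I would use in its weighted form via the standard multiplicity trick. The lemma says that a family with shatter function $O(m^d)$ and pairwise distance at least $\delta$ has size $O((N/\delta)^d)$.

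Here we apply it to the dual system, whose points are the elements $x\in W$ and whose sets are indexed by $\mathcal{N}$. Its shatter function is the primal one $\eta_G$ restricted in the usual symmetric way: for neighborhood systems the dual is again a neighborhood system of the same graph, so it is also $O(m)$. With $d=1$ this gives the distance $O(W_{\rm tot}/|W|)$.

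The second step is the greedy/reweighting construction. Start with all weights equal to $1$ and $W=V$. Repeatedly pick a light pair $x,y\in W$ as above, add the edge $xy$, remove $x$ from $W$, and double the weight of every set crossing $xy$. When $|W|=k$, the total weight grows by a factor of at most $1+O(1/k)$. Hence the final total weight is $n\prod_k(1+O(1/k))=n\cdot n^{O(1)}$. A set crossing $\kappa$ edges has weight $2^\kappa$, which is at most the total, so $\kappa=O(\log n)$. The chosen edges form a spanning tree, since each step joins a removed vertex to one still in $W$. Converting the tree to a path as above then gives contiguity $O(\log n)$.

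I expect the main obstacle to be the packing step. Specifically, one has to verify that the dual set system, with weights, still has linear shatter function, and get the right constant, since the linear case is exactly where Welzl's original bound loses a logarithm. I would handle this by noting that, for neighborhoods, $x,y$ are separated by $N(u)$ iff $u$ distinguishes $x$ and $y$. This is symmetric in the graph, so the dual traces coincide with neighborhood traces and $\eta_G$ bounds both. I would then invoke Haussler's bound with $d=1$ directly, without a union-bound loss.
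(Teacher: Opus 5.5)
Your proposal is correct and follows the route the paper relies on: the paper gives no proof of its own and cites Bonnet, Duron, Sylvester, and Zamaraev, whose argument is Welzl's multiplicative-reweighting construction of a low-crossing spanning tree driven by a Haussler-type packing lemma adapted to linear shatter functions, and your key observation that the dual of a neighborhood set system is again bounded by $\eta_G$ is exactly what makes it apply. One point to keep straight is that you need the packing lemma in its shatter-function form $O((N/\delta)^d)$, whose proof goes through verbatim for $d=1$, and not Haussler's VC-dimension form, because a class with $\eta_G(m)\le cm$ can have VC-dimension larger than $1$.
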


\noindent Combining Lemma~\ref{lem:bipbc_from_cont} with Theorems~\ref{thm:VCalt} and~\ref{thm:widthcont}, we obtain the bounds in our Theorem~\ref{thm:main}. So it just remains to consider the algorithmic aspects.

\subsection{Algorithms}

We now give a simple approximation algorithm for the total contiguity of a graph. Since Theorem~\ref{thm:bc_from_avg_cont} applies to graphs with small total contiguity, and since every $n$-vertex graph with contiguity $\ell$ has total contiguity at most $\ell n$, this is typically enough for most applications.
 
Let $G=(V,E)$ be a graph. We will compute a minimum weight spanning tree of an associated edge-weighted clique with vertex set $V$. The weight between two vertices $u$ and $w$ is simply the size $|N(u)\triangle N(w)|$ of the symmetric difference of their neighborhoods, or, equivalently, the Hamming distance between their characteristic vectors. (The \defn{characteristic vector} of a vertex $u$ is the vector in $\{0,1\}^V$ which has a $1$ for each neighbor of $u$.) In the following lemmas, we refer to such a tree $T$ as a \defn{spanning tree on $V$}, and to its \defn{weight} as:
\[
    \sum_{uw\in E(T)} |N(u)\triangle N(w)|.
\]

First we show that graphs of small total contiguity have small-weight spanning trees on~$V$.

\begin{lemma}
\label{lem:mstub}
    For any graph $G=(V,E)$, the minimum weight of a spanning tree on $V$ is at most $2t(G)$.
\end{lemma}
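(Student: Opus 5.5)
Take an ordering $\preceq$ attaining the total contiguity, so $\sum_{v}\ell_{\preceq}(v)=t(G)$, and write $V=\{v_1,\dots,v_n\}$ in this order. We use the Hamiltonian path $v_1v_2\cdots v_n$ as the spanning tree on $V$. Its weight is at least the minimum weight of a spanning tree, so it suffices to show
\[
\sum_{i=1}^{n-1} |N(v_i)\triangle N(v_{i+1})| \le 2t(G).
\]

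We double count this sum by exchanging the order of summation. It equals the number of pairs $(i,w)$ with $w\in V$ such that exactly one of $v_i,v_{i+1}$ is adjacent to $w$. Fix $w$ and consider the $0/1$ sequence $x_i=[v_i\in N(w)]$ for $i=1,\dots,n$. Each index $i$ with $x_i\neq x_{i+1}$ is a boundary between a maximal interval of $N(w)$ and its complement. Each of the $\ell_{\preceq}(w)$ maximal intervals of $N(w)$ contributes at most two such boundaries, its left end and its right end, and fewer if it touches $v_1$ or $v_n$. Hence, for fixed $w$, the number of $i$ with $w\in N(v_i)\triangle N(v_{i+1})$ is at most $2\ell_{\preceq}(w)$. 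Summing over $w$ gives total weight at most $2\sum_w\ell_{\preceq}(w)=2t(G)$.

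The only subtle point is the convention that $v\notin N(v)$. The sequence $x_i$ simply records whether $v_i$ lies in $N(w)$, and the slot for $w$ itself is a $0$. Since $N(w)$ is by definition a union of $\ell_{\preceq}(w)$ maximal intervals of the ordering, the position of $w$ just breaks intervals, and that is already counted in $\ell_{\preceq}(w)$. So the boundary count above is exact and no further correction is needed.
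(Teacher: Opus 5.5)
Your proof is correct and matches the paper's argument: take an order minimizing $\sum_v \ell_{\preceq}(v)$, use its Hamiltonian path as the spanning tree, and double count the path's weight as the total number of breakpoints of the neighborhoods $N(w)$ along the path. Then bound each vertex's breakpoint count by $2\ell_{\preceq}(w)$. Your remark on the convention $w\notin N(w)$ is a correct clarification of a point the paper leaves implicit.
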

\begin{proof}
    Consider an ordering $\preceq$ on $V$ that minimizes $\sum_{v} \ell_{\preceq}(v)$, and let $P$ denote the path obtained by ordering the vertices $V$ according to $\preceq$. For a given vertex $v$, we let $b_{\preceq}(v)$ be the number of \defn{breakpoints} of $N(v)$ in the ordering $\preceq$, namely, the number of edges $uw$ of $P$ such that exactly one of $u, w$ is contained in $N(v)$. Breakpoints therefore correspond to successive pairs of  bits of the form 01 or 10 in the binary vector encoding of $N(v)$. Every interval in the neighborhood $N(v)$ corresponds to at most two breakpoints, so $b_{\preceq}(v)\leq 2 \ell_{\preceq}(v)$. Hence $\sum_v b_{\preceq}(v) \leq 2 \sum_v \ell_{\preceq}(v) =2t(G)$.

    Now, by double counting, the weight of $P$ is
    \begin{eqnarray*}
    \sum_{uw\in E(P)} |N(u)\triangle N(w)|
     & = &  
     \sum_{uw\in E(P)} \sum_{v\in V} [|N(v)\cap \{u, w\}| = 1  ] \\
    & = & \sum_{v\in V} \sum_{uw\in E(P)} [ |N(v)\cap \{u, w\}| = 1 ]  \\
    & = & \sum_{v\in V} b_{\preceq}(v).
    \end{eqnarray*}
    Therefore the weight of $P$ is at most $2t(G)$, as desired.
\end{proof}

Next we show the other direction; given a spanning tree of small weight, we can compute an order which shows that $G$ has small total contiguity. This is a simple application of the double-tree shortcutting method for the metric traveling salesman problem.

\begin{lemma}
\label{lem:mstlb}
    Given a graph $G=(V,E)$ with $n$ vertices and a spanning tree $T$ on $V$ with weight $W$, one can in time $O(n)$ compute an ordering $\preceq$ on $V$ such that
    $\sum_{v} \ell_{\preceq}(v) \leq W + n$.
\end{lemma}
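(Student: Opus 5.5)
We will use the standard double-tree shortcutting. Root $T$ at an arbitrary vertex and perform a depth-first traversal, and let $\preceq$ be the order in which vertices are first visited (preorder). This takes $O(n)$ time. The aim is to bound, for each vertex $v$, the number of breakpoints $b_\preceq(v)$ of $N(v)$ along the path $P$ given by $\preceq$. Since $N(v)$ is a union of $\ell_\preceq(v)$ maximal intervals, and each interval except possibly one touching the beginning of the order has a left endpoint producing a $01$ breakpoint, we get $\ell_\preceq(v)\le b_\preceq(v)/2+1$. In fact, to aim for $W+n$, the cruder bound $\ell_\preceq(v)\le b_\preceq(v)+1$ suffices. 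So it remains to show
\[
\sum_v b_\preceq(v) \le 2W.
\]
This could then be sharpened to $\sum_v \ell_\preceq(v)\le W+n$ using the factor $1/2$ above.

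To bound the breakpoints, we use the double-counting identity from the proof of Lemma~\ref{lem:mstub}. It gives $\sum_v b_\preceq(v)=\sum_{uw\in E(P)}|N(u)\triangle N(w)|$, the weight of $P$. Each consecutive pair $u,w$ in the preorder is joined in the Euler tour of the doubled tree by the tree path from $u$ up to their lowest common ancestor and down to $w$. The weights $|N(x)\triangle N(y)|$ form a (pseudo)metric, namely the Hamming distance, so the triangle inequality bounds $|N(u)\triangle N(w)|$ by the weight of that tree walk. Concatenating these walks over consecutive pairs gives a sub-walk of the full Euler tour, which traverses every tree edge exactly twice. Hence the weight of $P$ is at most $2W$.

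The main obstacle is getting the constant to match $W+n$ rather than $2W+n$. Combining the previous two paragraphs gives $\sum_v\ell_\preceq(v)\le \sum_v(b_\preceq(v)/2+1)\le W+n$. The key point to check carefully is the inequality $\ell_\preceq(v)\le b_\preceq(v)/2+1$. Every maximal interval has at most two boundary transitions within $P$, and only intervals containing the first or last vertex lose one, so $2\ell_\preceq(v)\le b_\preceq(v)+2$. Isolated vertices with empty neighborhood contribute $0$ and are harmless.
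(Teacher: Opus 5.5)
Your proof is correct and matches the paper's argument step for step. You take the preorder of a DFS of $T$, shortcut the doubled-tree Euler tour using the triangle inequality for Hamming distance to get $\sum_v b_\preceq(v)\le 2W$, and then apply $\ell_\preceq(v)\le b_\preceq(v)/2+1$. One slip: your early remark that the cruder bound $\ell_\preceq(v)\le b_\preceq(v)+1$ suffices is false, since it only gives $2W+n$. Your final argument correctly uses the factor $1/2$, so this does not affect the proof.
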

\begin{proof}
    A depth-first traversal of $T$ uses every edge of $T$  exactly twice. Hence the sum of the successive weights of the edges in a traversal is exactly $2W$.
    Since the Hamming distance satisfies the triangle inequality, we can shortcut such a traversal without increasing the total weight.
    Let $\preceq$ order the vertices of $G$ by when they are first visited by a depth-first traversal. Also let $P$ denote the path on $V$ obtained by ordering $V$ according to $\preceq$. From our observation about shortcuts, 
    \[
    \sum_{uw \in E(P)}|N(u)\Delta N(w)|\leq 2W.
    \]
    Now, as argued in Lemma~\ref{lem:mstub}, the weight of $P$ equals $\sum_{v} b_{\preceq}(v)$.
    Since $\ell_{\preceq}(v) \leq b_{\preceq}(v)/2 + 1$, we obtain
    \[
    \sum_{v\in V} \ell_{\preceq}(v)\leq n + \sum_{v\in V} b_{\preceq}(v)/2 \leq W + n.
    \] 
\end{proof}

We now use a recent randomized algorithm of Kowaluk, Lingas, and Persson~\cite[Theorem~3]{KLP26} to find an approximate minimum weight spanning tree on $n$ binary vectors of size $n$ with respect to the Hamming distance in time $O(n^2\log n)$. 

\begin{theorem}[\cite{KLP26}]
\label{thm:mstalg}
For any positive $\epsilon < 1/2$, one can compute a $\frac{1+\epsilon}{1-\epsilon}$-approximate minimum weight spanning tree on a set of $n$ points in $\{0,1\}^n$ with respect to the Hamming distance in time $O(n^2\log n/\epsilon^2)$ with high probability.
\end{theorem}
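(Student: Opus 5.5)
The plan is to reduce the problem to an exact minimum spanning tree computation on a complete graph whose edge weights are $(1\pm\epsilon)$-approximations of the Hamming distances. Those approximate weights come from a Johnson--Lindenstrauss random projection.

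\textbf{Step 1: turn Hamming distance into squared Euclidean distance.} For binary vectors $x,y\in\{0,1\}^n$ we have $|x\triangle y| = \|x-y\|_2^2$. So it suffices to approximate all pairwise squared Euclidean distances among the $n$ characteristic vectors within a factor $1\pm\epsilon$.

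\textbf{Step 2: apply a random projection.} Take a random $n\times k$ matrix $R$ with i.i.d.\ Gaussian (or $\pm1$) entries scaled by $1/\sqrt{k}$, where $k=C\log n/\epsilon^2$. By the Johnson--Lindenstrauss lemma, with high probability, for all $\binom{n}{2}$ pairs simultaneously,
\[
(1-\epsilon)\|x-y\|_2^2 \le \|xR-yR\|_2^2 \le (1+\epsilon)\|x-y\|_2^2 .
\]
This follows from a union bound over the pairs, which is why $k=\Theta(\log n/\epsilon^2)$ suffices. Computing the projections is a product of the $n\times n$ adjacency matrix with $R$, which takes time $O(n^2k)=O(n^2\log n/\epsilon^2)$. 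Evaluating the approximate distance $d'(u,w)=\|uR-wR\|_2^2$ for all pairs costs $O(k)$ per pair, so the total is again $O(n^2\log n/\epsilon^2)$.

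\textbf{Step 3: compute an exact MST and transfer the bound.} Run Prim's algorithm with an array-based implementation on the complete graph with weights $d'$, in time $O(n^2)$. Let $T'$ be the tree it returns and $T^*$ a true Hamming minimum spanning tree, and write $d$ for the Hamming distance. Then
\[
d(T') \le \frac{d'(T')}{1-\epsilon} \le \frac{d'(T^*)}{1-\epsilon} \le \frac{1+\epsilon}{1-\epsilon}\, d(T^*).
\]
This is exactly the claimed approximation ratio. The condition $\epsilon<1/2$ keeps the ratio bounded and keeps $1-\epsilon$ away from zero.

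The main obstacle is Step~2: certifying that the dimension bound $k=O(\log n/\epsilon^2)$ yields a \emph{multiplicative} guarantee for every pair at once, including pairs at very small distance. Naive coordinate sampling fails for such pairs, which is why I would use a genuine JL projection rather than sampling coordinates. I would quote the standard JL concentration inequality $\Pr\bigl[\,\bigl|\|zR\|^2-\|z\|^2\bigr|>\epsilon\|z\|^2\bigr]\le 2e^{-c\epsilon^2 k}$. Choosing $C$ large then makes the union bound over the $n^2$ pairs give success probability $1-1/\mathrm{poly}(n)$.
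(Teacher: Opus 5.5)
Your proof is correct: the identity $|x\triangle y|=\|x-y\|_2^2$, a Johnson--Lindenstrauss projection to $k=O(\log n/\epsilon^2)$ dimensions computed in $O(n^2k)$ time, an exact $O(n^2)$ array-based Prim on the projected distances, and the chain $d(T')\le d'(T')/(1-\epsilon)\le d'(T^*)/(1-\epsilon)\le\frac{1+\epsilon}{1-\epsilon}\,d(T^*)$ together give exactly the stated ratio, running time, and high-probability guarantee. The paper itself gives no proof and simply imports the statement from Kowaluk, Lingas, and Persson; your argument is the standard way to obtain this kind of bound, since a $\frac{1+\epsilon}{1-\epsilon}$ ratio is what a two-sided $(1\pm\epsilon)$ distortion followed by an exact minimum spanning tree yields, so there is nothing to add.
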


For our purposes, we can fix for instance $\epsilon = 1/3$ to obtain a $2$-approximation. We now combine all of these results to obtain the following approximation algorithm for the total contiguity.

\begin{lemma}
\label{lem:ordalg}
There is an $O(n^2\log n)$-time randomized algorithm that given an $n$-vertex graph $G=(V,E)$, computes an ordering $\preceq$ of $V$ such that $\sum_{v} \ell_{\preceq}(v)\leq 4 t(G) + n$.
\end{lemma}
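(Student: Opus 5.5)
The plan is to chain Theorem~\ref{thm:mstalg}, Lemma~\ref{lem:mstub} and Lemma~\ref{lem:ordalg}'s predecessor Lemma~\ref{lem:mstlb}. First, I build the $n$ characteristic vectors of the vertices of $G$ in $\{0,1\}^V$ from the adjacency matrix, in $O(n^2)$ time. Each point then lies in $\{0,1\}^n$, and the Hamming distance between the characteristic vectors of $u$ and $w$ is exactly $|N(u)\triangle N(w)|$. So the spanning tree problem in Theorem~\ref{thm:mstalg} is precisely the problem of finding a minimum weight spanning tree on $V$ in the sense used above.

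Next, I run the algorithm of Theorem~\ref{thm:mstalg} with $\epsilon = 1/3$. The approximation ratio is then $\frac{1+1/3}{1-1/3} = 2$, and the running time is $O(9 n^2\log n) = O(n^2\log n)$. With high probability it returns a spanning tree $T$ on $V$ whose weight $W$ is at most twice the minimum weight of a spanning tree on $V$. By Lemma~\ref{lem:mstub}, that minimum is at most $2t(G)$, so $W \le 4t(G)$.

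Finally, I apply Lemma~\ref{lem:mstlb} to $T$. In $O(n)$ additional time this gives an ordering $\preceq$ with
\[
\sum_v \ell_{\preceq}(v) \le W + n \le 4t(G) + n.
\]
The total running time is dominated by the spanning tree computation, which is $O(n^2\log n)$.

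There is no real obstacle here. The only points to check are that the Hamming distance matches the weight function exactly (vertices are not in their own neighborhoods, but this is consistent across both lemmas), and that the randomized guarantee of Theorem~\ref{thm:mstalg} holds with high probability, which is what ``randomized algorithm'' means in the statement.
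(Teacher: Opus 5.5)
Your proposal is correct and matches the paper's proof: you run Theorem~\ref{thm:mstalg} with $\epsilon = 1/3$ to get a $2$-approximate tree, bound the optimum by $2t(G)$ via Lemma~\ref{lem:mstub}, and convert the tree to an ordering with Lemma~\ref{lem:mstlb}, giving $W' + n \le 2W + n \le 4t(G)+n$. Your extra remarks on building the characteristic vectors in $O(n^2)$ time and on the high-probability guarantee are accurate but not needed beyond what the paper states.
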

\begin{proof}
    Run the algorithm of Theorem~\ref{thm:mstalg} to obtain a 2-approximate minimum weight spanning tree on $V$, say of weight $W'$. Thus
    $W'\leq 2W$, where $W$ is the minimum weight of a spanning tree on $V$.
    From Lemmas~\ref{lem:mstub} and~\ref{lem:mstlb}, one can compute an ordering $\preceq$ on $V$ such that 
    \begin{align*}
    \sum_{v \in V} \ell_{\preceq}(v) & \leq  W' + n && \text{(from Lemma~\ref{lem:mstlb})} \\
    & \leq  2W + n&& \text{(from the 2-approximation})\\
    & \leq  4 t(G) + n && \text{(from Lemma~\ref{lem:mstub})}.
    \end{align*}
    This completes the proof of Lemma~\ref{lem:ordalg}.
\end{proof}

Finally, we show how to construct the biclique decomposition from the ordering. Note that the algorithm and its running time are independent of the total contiguity~$t(G)$.
 
\begin{theorem}
\label{thm:alg}
    Given an $n$-vertex graph $G=(V,E)$, there is an $O(n^2\log n)$-time randomized algorithm that computes a biclique decomposition of $G$ of size $O(t(G)\cdot \log n)$. 
\end{theorem}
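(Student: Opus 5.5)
The plan is to chain Lemma~\ref{lem:ordalg} with the construction in the proof of Lemma~\ref{lem:bipbc_from_cont}, in its total-contiguity form from Theorem~\ref{thm:bc_from_avg_cont}, and then check that the construction itself runs in $O(n^2\log n)$ time. We may assume $G$ has no isolated vertices, since such vertices lie in no biclique and can be discarded in $O(n^2)$ time. Then $t(G)\ge n$. First run the algorithm of Lemma~\ref{lem:ordalg}, which in $O(n^2\log n)$ time produces an ordering $\preceq$ with $\sum_v \ell_{\preceq}(v)\le 4t(G)+n\le 5t(G)$.

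Next, build the balanced binary tree $T$ over $\preceq$ in $O(n)$ time, labelling each node $t$ by its dyadic interval $L(t)$. For each vertex $v$, scan its adjacency row in $\preceq$-order, restricted to vertices before $v$. This yields the maximal intervals of its left-neighborhood in $O(n)$ time, and their number is at most $\ell_{\preceq}(v)$. Decompose each such interval into its canonical maximal dyadic pieces by the standard top-down walk in $T$. This costs $O(\log n)$ per interval and produces at most $2\lceil\log_2 n\rceil$ pieces. For each piece $L(t)$, append $v$ to the right side of the biclique attached to node $t$. Overall this step takes $O(n^2 + \sum_v \ell_{\preceq}(v)\log n)=O(n^2\log n)$ time, because $\sum_v\ell_{\preceq}(v)\le n^2$.

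Finally, output for every node $t$ with a nonempty right side the biclique with left side $L(t)$ and the collected right side. This is a decomposition by exactly the argument of Lemma~\ref{lem:bipbc_from_cont}: the left-neighborhood of each $v$ is partitioned by its dyadic pieces. The right sides contribute $O(\sum_v \ell_{\preceq}(v)\log n)$ to the size. The left sides contribute at most $n\lceil\log_2 n\rceil$, because each vertex lies in at most $\lceil \log_2 n\rceil$ dyadic intervals. Since $t(G)\ge n$, the total size is $O(t(G)\log n)$.

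The only step needing care is the output cost. Writing out every $L(t)$ explicitly costs $\sum_t |L(t)| = O(n\log n)$, which is fine, and the right sides are bounded by the size bound, which is at most $O(n^2\log n)$. So the running time is dominated by the spanning-tree computation of Theorem~\ref{thm:mstalg}, and no real obstacle remains beyond this bookkeeping.
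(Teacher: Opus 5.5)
Your proposal is correct and follows the paper's proof: run Lemma~\ref{lem:ordalg}, build the balanced tree on the resulting order, and extract for each vertex the maximal dyadic pieces of its left-neighborhood. The size is then charged as in Theorem~\ref{thm:bc_from_avg_cont}, using $t(G)\ge n$ after discarding isolated vertices. The only difference is how you extract the pieces: a top-down walk per interval, costing $O(n^2+\sum_v\ell_{\preceq}(v)\log n)=O(n^2\log n)$ overall, whereas the paper labels tree nodes $0/1/?$ bottom-up for each vertex to get $O(n^2)$ for this step; both fit within the $O(n^2\log n)$ budget set by the spanning-tree computation.
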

\begin{proof}
    First we apply Lemma~\ref{lem:ordalg} to find an ordering $\preceq$ on $V$ with $\sum_{v} \ell_{\preceq}(v)\leq 4 t(G) + n$ in time $O(n^2\log n)$. We can construct a binary search tree on this ordering in time $O(n\log n)$. Then for each vertex $v$, we need to identify the dyadic intervals whose union equals $N(v)$. This takes time $O(n^2)$, by proceeding as follows. For each vertex $v$, we label each node of the tree by $0/1/?$ depending on whether $v$ is adjacent to none/all/some of the vertices in the corresponding dyadic interval. For a fixed vertex $v$, this can be done in time $O(n)$ by working bottom-up on $T$. Then we can find the maximal nodes labeled $1$ in time $O(n)$ by working top-down.
\end{proof}

Combining Theorem~\ref{thm:alg} with Theorems~\ref{thm:VCalt} and \ref{thm:widthcont} proves Theorem~\ref{thm:main}.

\paragraph{Remarks.}

Finding a Welzl order via a spanning tree can be traced back to Welzl's original paper~\cite[Lemma 3.3]{MR1213461}. 
The classical algorithm for computing Welzl orders involves a well-known reweighting method~\cite{MR1213461}. For a detailed analysis, we refer to Matou\v{s}ek's description of the method~\cite[Lemma 5.17]{MR2683232}, and its adaptation to the case of arbitrary (including linear) shatter functions due to Bonnet, Duron, Sylvester, and Zamaraev~\cite[Appendix A]{MR4868423}. 
Recently, Dreier and Kuske~\cite{DK26} proposed an $O((n + m)\log n)$-time algorithm for computing Welzl orders with contiguity $O(\log^2 n)$ for graphs of linear neighborhood complexity on $n$ vertices and $m$ edges. For graphs of neighborhood complexity $\eta_G(m)=O(m^d)$, they obtain orderings with contiguity $O(n^{1-1/d^2}\log^2 n)$.
Minimum spanning trees with respect to the Hamming distance have also been used recently in results about matrix multiplication, such as in Anand et al.~\cite{AvdBM25} and Kozma and Opler~\cite{KO26}, and on diameter computations by Chan, Chang, Gao, Kisfaludi-Bak, Lei, Zheng~\cite[Lemma 7.4]{CCGKLZ25}, and Duraj, Konieczny, and Potepa~\cite[Lemma 13]{DKP24} (see also \cite[Lemma 10]{BGKM26}). We do not claim any novelty here.

\section{Applications}

We now proceed with applications of the above results to the Zarankiewicz problem, matrix multiplication, quantum circuit complexity, and finding shortest paths.

\subsection{Zarankiewicz Problem}
\label{sec:zarankiewicz}

Zarankiewicz proposed the following problem in the 1950s: Given a graph on $n$ vertices, which does not contain any complete bipartite graph $K_{t, t}$ as subgraph, what is its maximum number of edges? The well-known K\"ovari-S\'os-Tur\'an theorem~\cite{MR65617} gives an upper bound. We refer to the recent survey of Smorodinsky~\cite{S24} for more background on this family of problems in geometric contexts.
The following simple observation gives an upper bound for the Zarankiewicz problem as a function of the size of a biclique cover. 

\begin{lemma}\label{lemma:sizeviaZarankiewicz}
    Let $G$ be a graph without a $K_{d,t}$ subgraph, for some $d,t\in\mathbb{N}$ where $t\ge d$, and with a biclique cover of size $s$. Then $G$ has at most $t\cdot s$ edges.
\end{lemma}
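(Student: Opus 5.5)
Since $\mathcal{B}$ covers every edge, $|E(G)|\le\sum_{B\in\mathcal{B}}|E(B)|$, and each biclique $B=(X_B,Y_B)$ in the cover is a subgraph of $G$. The plan is to bound $|E(B)|=|X_B|\,|Y_B|$ by $t\cdot|V(B)|=t(|X_B|+|Y_B|)$, or something at most this, and then sum over $B\in\mathcal{B}$ to get $|E(G)|\le t\cdot s$.

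For a single biclique $B$ with sides $X_B,Y_B$, assume without loss of generality that $|X_B|\le|Y_B|$. Then $|X_B|\,|Y_B|\le|V(B)|\cdot\min(|X_B|,|Y_B|)$, so it suffices to handle two cases according to the smaller side.

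\emph{Case $\min(|X_B|,|Y_B|)<d$.} The smaller side has at most $d-1\le t$ vertices, so $|E(B)|\le (d-1)|Y_B|\le t|V(B)|$.

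\emph{Case $\min(|X_B|,|Y_B|)\ge d$.} Since $B$ is a complete bipartite subgraph of $G$, if $|Y_B|\ge t$ then choosing $d$ vertices of $X_B$ and $t$ vertices of $Y_B$ exhibits a $K_{d,t}$ in $G$, which is a contradiction. Hence $|Y_B|\le t-1$, and symmetrically $|X_B|\le t-1$ (here $t\ge d$ makes the roles interchangeable). Therefore $|X_B|\,|Y_B|\le (t-1)\min(|X_B|,|Y_B|)\le t\,|V(B)|$.

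Combining the two cases gives $|E(B)|\le t\,|V(B)|$ for every $B\in\mathcal{B}$, and summing over $\mathcal{B}$ yields $|E(G)|\le t\sum_{B}|V(B)|=t\cdot s$. The only subtle point is making sure the forbidden $K_{d,t}$ is applied in the right orientation. This is handled by noting that a complete bipartite subgraph with one side of size at least $d$ and the other of size at least $t$ contains $K_{d,t}$, whichever side is which.
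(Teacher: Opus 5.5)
Your proof is correct and follows the same route as the paper: you bound $|E(B)|\le t\,|V(B)|$ for each biclique by showing its smaller side has fewer than $t$ vertices, and then sum over the cover. Your case split on whether the smaller side has fewer than $d$ vertices is harmless but unnecessary. The paper gets the bound on the smaller side in one step: if both sides had at least $t\ge d$ vertices, $B$ would contain $K_{t,t}\supseteq K_{d,t}$.
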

\begin{proof}
Indeed, for any biclique $B$ in the cover, we have $|E (B)|\leq t\cdot |V (B)|$ since every vertex on the larger side of $B$ has at most $t$ neighbors. Hence, $|E(G)|\leq \sum_B |E(B)|\leq t\cdot\sum_B |V(B)| = t\cdot s$.
\end{proof}

In particular, we can apply Lemma~\ref{lem:bipbc_from_cont} and Theorem~\ref{thm:VCalt} to recover, up to a logarithmic factor, a result from Fox, Pach, Sheffer, Suk, and Zahl~\cite[Theorem 2.1]{MR3646875} for graphs of bounded VC-dimension or polynomial neighborhood complexity.

\begin{corollary}
    For any real numbers $c>0$ and $d>1$, there exists $c'=c'(c, d)$ such that the following holds.
    Let $G$ be a graph on $n$ vertices with neighborhood complexity $\eta_G(m)\leq c\cdot m^d$, and without a $K_{t,t}$ subgraph, for some $t\in\mathbb{N}$.
    Then $G$ has at most $c'\cdot t\cdot n^{2 - 1/d}\log n$ edges.
\end{corollary}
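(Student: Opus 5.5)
The plan is to chain three earlier results: bound the contiguity with Theorem~\ref{thm:VCalt}, turn it into a small biclique decomposition with Lemma~\ref{lem:bipbc_from_cont}, and convert that into an edge bound with Lemma~\ref{lemma:sizeviaZarankiewicz}.

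First, Theorem~\ref{thm:VCalt} applied to the hypothesis $\eta_G(m)\le c\, m^d$ gives $\ell(G)\le c_1 n^{1-1/d}$, where $c_1=c_1(c,d)$ is the constant of that theorem. Next, Lemma~\ref{lem:bipbc_from_cont} yields a biclique decomposition $\mathcal{B}$ in which every vertex lies in at most $(\ell+1)\lceil\log_2 n\rceil$ bicliques. The size of $\mathcal{B}$ is the sum over vertices of the number of bicliques containing them, so
\[
s \le n(\ell+1)\lceil \log_2 n\rceil \le n\,(c_1 n^{1-1/d}+1)(\log_2 n+1) = O_{c,d}(n^{2-1/d}\log n).
\]
For $n\ge 2$ this is at most $c_2 n^{2-1/d}\log n$, where $c_2$ depends only on $c$ and $d$. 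The case $n=1$ is trivial, since then $G$ has no edges.

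Finally, $G$ has no $K_{t,t}$ subgraph, so Lemma~\ref{lemma:sizeviaZarankiewicz} applies with the pair $(t,t)$ (the condition $t\ge d$ there holds with equality for $K_{t,t}$). It gives $|E(G)|\le t\cdot s\le c_2\, t\, n^{2-1/d}\log n$, so we may take $c'=c_2$. A decomposition is in particular a cover, so the lemma is applicable to $\mathcal{B}$.

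Nothing here should be a genuine obstacle; the only care needed is the bookkeeping. The constant $c'$ must absorb the $+1$ in $\ell+1$ and the ceiling on the logarithm, and must stay independent of $t$. It does, because the factor $t$ enters only through Lemma~\ref{lemma:sizeviaZarankiewicz}. This independence is what gives the linear dependence on $t$, rather than the worse dependence in~\cite{MR3646875}.
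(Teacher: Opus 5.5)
Your proposal is correct and follows the paper's route: the paper also obtains this corollary by combining Theorem~\ref{thm:VCalt} with Lemma~\ref{lem:bipbc_from_cont}, which gives a biclique decomposition of size $O(n^{2-1/d}\log n)$, and then applying Lemma~\ref{lemma:sizeviaZarankiewicz} with the pair $(t,t)$. Your bookkeeping is sound: you absorb the $+1$ and the ceiling into $c'$, handle $n=1$ separately, and keep $c'$ independent of $t$.
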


Like the proof of the existence of Welzl orders of Theorem~\ref{thm:VCalt}, the proof of Fox et al.~\cite{MR3646875} relies on Haussler's packing lemma~\cite{MR1313896}, but their bound has a hidden constant factor of $\Omega(t^{2d + 1})$. Hence we obtain an improved bound in the regime where $t>\log^{\frac1{2d}} n$.
The exponent $2-1/d$ can be shown to be best possible, as discussed in the conclusion of Fox et al.~\cite{MR3646875}, using the constructions of Koll\'{a}r, R\'{o}nyai, and Szab\'{o}~\cite{MR1417348} and Alon, R{\'o}nyai, and Szab{\'o}~\cite{alon1999norm}. This implies a lower bound on the size of biclique decompositions. To see this, first consider the following theorem. We note that if $G$ is a graph with no $K_{d,t}$ -subgraph, then the bipartite graph whose bipartite adjacency matrix is the same as the adjacency matrix of $G$ also has no $K_{d,t}$-subgraph. Thus any construction can be turned into a bipartite construction.

\begin{theorem}[\cite{alon1999norm}]\label{thm:LBZarankiewicz}
    For any fixed integers $d\ge 2$ and $t\ge (d-1)!+1$, there exists a bipartite graph with both parts of size $n$, without a $K_{d,t}$ subgraph, and with $\Omega(n^{2-1/d})$ edges.  
\end{theorem}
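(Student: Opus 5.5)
This is the Alon--R\'onyai--Szab\'o norm-graph theorem; I would give the projective norm-graph construction and check its two required properties. Fix a prime power $q$ and let $N\colon \mathbb{F}_{q^{d-1}}\to \mathbb{F}_q$ be the field norm $N(x)=x^{1+q+\cdots+q^{d-2}}$.

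\textbf{Construction.} Take both parts equal to $\mathbb{F}_{q^{d-1}}\times \mathbb{F}_q^{*}$. Join $(A,a)$ on the left to $(B,b)$ on the right if and only if $N(A+B)=ab$.

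\textbf{Size and edge count.} Each part has $n=q^{d-1}(q-1)=\Theta(q^d)$ vertices. Fix $(A,a)$ and any $B$ with $A+B\neq 0$; there are $q^{d-1}-1$ such $B$. Then $b=N(A+B)/a$ is a uniquely determined element of $\mathbb{F}_q^*$, so the degree is $q^{d-1}-1=\Theta(n^{1-1/d})$. Hence the graph has $\Theta(n^{2-1/d})$ edges. For general $n$, choose $q$ prime with $q^d$ comparable to $n$ (Bertrand's postulate) and pad with isolated vertices; this changes the edge count only by a constant factor.

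\textbf{Excluding $K_{d,t}$.} Suppose $d$ distinct left vertices $(A_i,a_i)$ have a common neighbour $(X,x)$. Then $N(X+A_i)=a_ix$ for every $i$. Dividing the first equation by the others eliminates $x$ and gives $d-1$ equations
\[
N(X+A_1)/a_1 = N(X+A_i)/a_i, \qquad i=2,\dots,d,
\]
in the single unknown $X\in\mathbb{F}_{q^{d-1}}$. Once $X$ is fixed, $x$ is determined. So it suffices to show this system has at most $(d-1)!$ solutions $X$, which then bounds the common neighbourhood by $(d-1)!$ and rules out $K_{d,(d-1)!+1}$.

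This is the main obstacle. The standard approach is to first dispose of the degenerate case $A_i=A_j$: then $a_i\neq a_j$, and the two equations are incompatible, so there is no common neighbour at all. With the $A_i$ distinct, substitute $Y=1/(X+A_1)$ and use $N(X+A_i)=N(X+A_1)\,N(1+(A_i-A_1)Y)$. This turns the system into $N(1+D_iY)=c_i$ with the $D_i$ distinct and nonzero. Writing the norm as the product of Galois conjugates $\prod_j(1+D_i^{q^j}Y^{q^j})$, we obtain a system of $d-1$ polynomial equations over the algebraic closure. The Koll\'ar--R\'onyai--Szab\'o lemma bounds its number of solutions by $(d-1)!$; it rests on a Vandermonde-type nonvanishing determinant, which is where distinctness of the $D_i$ is used. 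I would cite that lemma as given, since it is the genuinely algebraic step, rather than reprove it.

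Since $t\ge (d-1)!+1$, this construction has the required properties and completes the proof.
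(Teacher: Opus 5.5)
Your proposal is correct and is the Alon--R\'onyai--Szab\'o projective norm-graph argument itself; the paper gives no proof of this statement and simply cites it, so your sketch supplies what the citation stands for. The reduction via $Y=1/(X+A_1)$ to the $d-1$ equations $N(1+D_iY)=c_i$, with the $D_i$ distinct and nonzero, followed by the Koll\'ar--R\'onyai--Szab\'o bound of $(d-1)!$ solutions, is the standard route; the one point worth stating is that the adjacency rule $N(A+B)=ab$ is symmetric in the two parts, so the same count also excludes a $K_{d,t}$ whose $d$-side lies in the right part.
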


Combining the above results we can deduce the following lower bound.

\begin{restatable}{theorem}{lowerbound}\label{thm:LB}
    For any fixed positive integer $d$, there exists an integer $c$ so that there are graphs with neighborhood complexity $\eta_G(m)\leq cm^d$ where the size of any biclique cover is $\Omega(n^{2-1/d})$. 
\end{restatable}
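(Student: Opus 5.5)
Fix $d$. For $d=1$ the claim is trivial: a perfect matching on $n$ vertices has neighborhood complexity at most $m+1\le 2m$, and any biclique cover of a graph with $\Omega(n)$ edges and no isolated vertices has size $\Omega(n)$, since every non-isolated vertex lies in some biclique. So assume $d\ge 2$. I will take the bipartite graph $H$ from Theorem~\ref{thm:LBZarankiewicz}, with $t=(d-1)!+1$: it has parts of size $n$, no $K_{d,t}$ subgraph, and $\Omega(n^{2-1/d})$ edges. The plan is to show that $H$ has polynomial neighborhood complexity of degree $d$, and that every biclique cover of $H$ has size $\Omega(n^{2-1/d})$. The lower bound on the cover size comes from Lemma~\ref{lemma:sizeviaZarankiewicz}, which gives $|E(H)|\le t\cdot s$ for a cover of size $s$. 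Since $t$ is a constant depending only on $d$, this yields $s\ge |E(H)|/t=\Omega(n^{2-1/d})$. The graph has $2n$ vertices, so the bound is $\Omega(N^{2-1/d})$ in terms of the vertex count $N=2n$.

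The main step is the neighborhood complexity bound $\eta_H(m)\le c m^d$, which I would prove directly from the absence of $K_{d,t}$, with no reference to the algebraic construction. Fix a set $C$ of size $m$. Consider the traces $N(v)\cap C$ over all $v\in V(H)$ and split them by size.

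\begin{itemize}
\item Traces of size less than $d$ are subsets of $C$ with fewer than $d$ elements. There are at most $\sum_{i<d}\binom{m}{i}=O(m^{d-1})$ of them.
\item Every trace $S$ of size at least $d$ contains some $d$-subset $D\subseteq S$. Map $S$ to one such $D$. All vertices $v$ with trace $S$ are common neighbors of $D$. Since $H$ has no $K_{d,t}$, fewer than $t$ vertices are adjacent to all of $D$. So each $d$-subset $D$ is the image of fewer than $t$ distinct traces, and there are at most $(t-1)\binom{m}{d}$ such traces.
\end{itemize}

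Here I should check that $K_{d,t}$-freeness applies with the $d$ vertices on either side. In the bipartite setting $N(v)\cap C$ may mix both parts, but $D\subseteq N(v)$ forces all of $D$ into one part, because $v$'s neighbors lie on the opposite side. The common neighbors of $D$ then lie in the other part. If the theorem only forbids $K_{d,t}$ with the $d$-side in a fixed part, I would note that traces $D$ in the other part need a symmetric bound. The count is still fine if $t\ge d$ and the construction forbids $K_{t,d}$ as well. Failing that, I would use that a $K_{t,d}$ with $t\ge d$ contains a $K_{d,d}\subseteq K_{d,t}$. Thus forbidding $K_{d,t}$ with $t\ge d$ in one orientation forbids $K_{d,d}$ in both, and I can run the same argument with $t$ replaced by $d$ when needed.

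Altogether $\eta_H(m)\le (t-1)\binom{m}{d}+O(m^{d-1})\le c\,m^d$ for a constant $c=c(d)$, and adding $1$ for the empty trace does not change this. Combined with the cover bound above, this proves the theorem. The expected obstacle is only this orientation bookkeeping; everything else is a direct combination of Theorem~\ref{thm:LBZarankiewicz} and Lemma~\ref{lemma:sizeviaZarankiewicz}.
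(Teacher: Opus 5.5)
Your proposal is correct and follows essentially the same route as the paper. You take the graph of Theorem~\ref{thm:LBZarankiewicz} with $t=(d-1)!+1$, bound traces of size less than $d$ by $\sum_{i<d}\binom{m}{i}$ and traces of size at least $d$ by $(t-1)\binom{m}{d}$ via $K_{d,t}$-freeness, and conclude with Lemma~\ref{lemma:sizeviaZarankiewicz}, which applies since $(d-1)!+1\ge d$. Your separate treatment of $d=1$ and of sets $C$ meeting both parts is slightly more careful than the paper, which uses a theorem valid only for $d\ge 2$ and takes $C$ inside one part.

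Your orientation fallback gets the implication backwards: $K_{d,d}\subseteq K_{d,t}$ means $K_{d,t}$-freeness does \emph{not} imply $K_{d,d}$-freeness. It is never needed, though, because ``no $K_{d,t}$ subgraph'' already excludes copies with the $d$ vertices on either side.
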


\begin{proof}
    Let $G$ be the graph from \Cref{thm:LBZarankiewicz} with $t=(d-1)!+1$. Similarly to the discussion in the conclusions of Fox et al.~\cite{MR3646875}, the neighborhood complexity in such a graph is at most $O(m^d)$. Indeed, let $C$ be a subset of size $m$ of vertices in one of the parts of the bipartition of $G$. The number of different neighborhoods in $C$ of size at most $(d-1)$ is at most $\binom{m}{d-1}+\binom{m}{d-2}+\dots+\binom{m}{0}$. The number of neighborhoods in $C$ of size at least $d$ can be bounded by $(t-1)\binom{m}{d}$. The reason is that any set of $d$ vertices can be contained in at most $(t-1)$ neighborhoods as otherwise we get a copy of $K_{d,t}$. 

    Now assume that $G$ has a biclique cover of size $s$, then by \Cref{lemma:sizeviaZarankiewicz}, we get that $s=\Omega(n^{2-1/d})$, as required. 
\end{proof}

\subsection{Matrix multiplication}
\label{sec:multiplication}

In this subsection we describe how to deduce an efficient algorithm for the multiplication of two matrices where {\it only} one of the matrices needs to be a $0/1$-matrix and have a restricted structure. Variants of this question were studied by Anand, van den Brand, and McCarty in~\cite{AvdBM25} for matrices with bounded VC-dimension, and by Kozma and Opler~\cite{KO26} and Bonnet, Geniet, Kim and Moon~\cite{BGKM26}, generalizing on a result by Bonnet, Giocanti, Ossona de Mendez, and Thomassé in~\cite{BGOdMT23}, for matrices with bounded twin-width. Comparing with the result by Bonnet et al.~\cite{BGKM26}, the running time of our algorithm has an additional $\log n$, but on the other hand, can be applied to more general graph classes than just adjacency matrices of graphs with bounded twin-width.

We refer to Anand et al.~\cite{AvdBM25} and Karande, Chellapilla, and Andersen~\cite{KCA09} for applications of this fast multiplication algorithm to random walks, the PageRank algorithm, and the analysis of web graphs, among others.

\begin{theorem}
\label{thm:mult}
    Given a graph $G$ with linear neighborhood complexity and adjacency matrix $M\in \{0,1\}^{n\times n}$ and another arbitrary matrix $P\in \mathbb{R}^{n\times n}$, there is a randomized algorithm for computing the product $M P$ in time $O(n^2\log^2 n)$.
\end{theorem}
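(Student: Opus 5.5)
The plan is to compute a small biclique decomposition of $G$ and then use it to evaluate $MP$ column by column. Since $G$ has linear neighborhood complexity, Theorem~\ref{thm:widthcont} gives contiguity $O(\log n)$, so $t(G)=O(n\log n)$. Theorem~\ref{thm:alg} then computes, in randomized time $O(n^2\log n)$, a biclique decomposition $\mathcal{B}$ of $G$ of size $s=O(t(G)\log n)=O(n\log^2 n)$. One point to check is that Theorem~\ref{thm:alg} outputs a decomposition of the whole adjacency relation. The construction in Lemma~\ref{lem:bipbc_from_cont} only covers each edge $uv$ once, through the left-neighborhood of its later endpoint. Because $M$ is symmetric, each biclique $(A,B)$ must therefore be used in both directions: it contributes $M_{ba}=M_{ab}=1$ for all $a\in A$, $b\in B$. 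This changes the total work only by a constant factor.

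Next comes the multiplication. Write $M=\sum_{(A,B)\in\mathcal{B}}\bigl(\mathbf{1}_A\mathbf{1}_B^{\top}+\mathbf{1}_B\mathbf{1}_A^{\top}\bigr)$. This identity holds exactly because $\mathcal{B}$ is a decomposition: every edge lies in exactly one biclique, and $A\cap B=\emptyset$, so no diagonal entries appear and no entry is counted twice. For a single column $p$ of $P$, compute $Mp$ as follows. For each biclique $(A,B)$, compute the scalars $\sigma_A=\sum_{a\in A}p_a$ and $\sigma_B=\sum_{b\in B}p_b$ in time $O(|A|+|B|)$. Then add $\sigma_B$ to $(Mp)_a$ for every $a\in A$, and add $\sigma_A$ to $(Mp)_b$ for every $b\in B$, again in time $O(|A|+|B|)$. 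The total cost per column is $O(s)=O(n\log^2 n)$. Doing this for all $n$ columns costs $O(n^2\log^2 n)$, which dominates the $O(n^2\log n)$ preprocessing. This gives the claimed bound.

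There is no real obstacle here. The only subtle step is confirming that the decomposition produced by Theorem~\ref{thm:alg} represents $M$ exactly: symmetry must be handled, and there must be no double counting, which is where a decomposition rather than a mere cover is needed. A second detail is that the bound of Theorem~\ref{thm:alg} must be instantiated with $t(G)\le n\,\ell(G)=O(n\log n)$. The randomness enters only through the spanning tree algorithm of Theorem~\ref{thm:mstalg}. If that algorithm fails, the output is still a valid decomposition, only possibly larger, so correctness of $MP$ is unconditional and only the running-time bound holds with high probability.
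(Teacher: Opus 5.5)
Your proposal is correct and follows the paper's proof. Both compute a biclique decomposition of size $s=O(n\log^2 n)$ in $O(n^2\log n)$ time via Theorem~\ref{thm:alg} (equivalently Theorem~\ref{thm:main}), then obtain $MP$ as $n$ matrix--vector products costing $O(s)$ each; your use of each biclique in both directions, $\mathbf{1}_A\mathbf{1}_B^{\top}+\mathbf{1}_B\mathbf{1}_A^{\top}$, makes explicit a symmetry point the paper leaves implicit (its snippet in Figure~\ref{fig:pythonmult} only pushes sums from the left side to the right side), and your remark that the randomness affects only the running time, not correctness, is also sound.
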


\begin{proof}
One can observe that a biclique in $G$ corresponds to a collection of rows and columns in $M$ such that the sub-matrix of $M$ on the intersection of those rows and columns is the identically-$1$ matrix. Hence a biclique decomposition of $G$ corresponds to a decomposition of the $1$-entries of $M$ into all $1$s sub-matrices. 
From Theorem~\ref{thm:main}, such a biclique decomposition of size $O(n\log^2 n)$ can be computed in time $O(n^2\log n)$.

The multiplication of two $n\times n$ matrices can be done by $n$ multiplications of a matrix with a vector. Hence we focus on the latter.  
Computing $Mv$ where $M$ is a $a\times b$ all ones matrix with a vector $v\in \mathbb{R}^n$ can be done in time $O(a+b)$. Therefore given a biclique decomposition of the underlying graph of $M$ of size $s$, the product $Mv$ can be computed in time $O(s)$. The whole product $MP$ can therefore be computed in time $O(ns)=O(n^2\log^2 n)$.
\end{proof}

\begin{figure}[h!]
\begin{lstlisting}[language=Python]
def multiply(B: biclique_decomposition, v):
     result = [0] * len(v)
     for left, right in B:
         s = sum(v[j] for j in left)
         for j in right: result[j] += s
     return result
\end{lstlisting}
    \caption{\label{fig:pythonmult}A Python code snippet implementing the algorithm described in the proof of Theorem~\ref{thm:mult} for multiplying the adjacency matrix of a graph with a vector, given a biclique decomposition of the graph.}
\end{figure}

Given a biclique decomposition, the multiplication algorithm can be implemented easily, as illustrated in Figure~\ref{fig:pythonmult}. 
Note that we need not restrict to symmetric matrices: We can also consider the biadjacency matrix of a bipartite graph $G=(A\sqcup B, E)$, and the neighborhood set system $\{N(v): v\in A\}$ on the ground set $B$.
Also note that our result also applies to almost-linear neighborhood complexity, hence monadically dependent graph classes, with the running time $n^{2+o(1)}$.

\subsection{Quantum circuit complexity}
\label{sec:quantumcircuit}

In this section we prove Theorem~\ref{thm:quantumCircuit}, which says that every $n$-vertex graph state $\ket{G}$ with a biclique decomposition of size $k$ can be constructed by a stabilizer circuit of size $O(k+n)$. That is, $\ket{G}$ can be built from the state $\ket{0}^{\otimes n}$ using at most $O(k+n)$ gates, each of which is an $H$, $S$, or $CNOT$ gate. See the next subsection for a complete introduction. For now we discuss how to use this theorem to obtain some other previously known results in quantum circuit complexity. 

First of all, since every $n$-vertex graph has a biclique decomposition of size $O(n^2/\log n)$ due to a 1983 theorem of Chung, Erd\H{o}s, and Spencer~\cite{MR820214}, we recover the circuit complexity upper bound of~\cite{improvedSimulation04, optimalSynthesis} (which says that every $n$-vertex graph can be built from a stabilizer circuit of size $O(n^2/\log{n})$).  This upper bound matches the information-theoretic lower bound of $\Omega(n^2/\log n)$ from~\cite[Corollary~9]{circuitLowerBound}. Note that our theorem provides an improvement for graphs with biclique decompositions of size $o(n^2/\log n)$, so in particular for graphs with polynomial neighborhood complexity (see Theorem~\ref{thm:main}). We will also recover some quantum circuit complexity upper bounds from~\cite{DJ25, KMY25} up to logarithmic factors; see Lemma~\ref{lem:intervalCircle}. 

Finally, since every graph with average degree $d$ has a biclique decomposition of size $O(nd)$, our biclique-based method generalizes the degeneracy-based method (which is useful in practice for quantum error-correcting codes, as explained in~\cite{fastSimulationGS06}). We note that for graphs with small total contiguity, Theorem~\ref{thm:alg} shows how to efficiently construct the biclique decomposition and thus, as we will show, also the stabilizer circuit. So many of these results are effective.

Graph states have broad applications across quantum computing. Some of the most well-known uses arise in quantum error-correcting codes~\cite{CliffordErrorCorrection} and a ``resource-based'' model of quantum computing known as \defn{measurement-based quantum computing}~\cite{OneWayQC}. These applications are surveyed in~\cite{graphStateEntanglement}. More recently, there is renewed interest in graph states for applications in quantum networking. Graph states naturally model the \defn{multiparty} setting, in which many distributed users share a single quantum state. See for instance~\cite{transformingStates, quantumNetworkRoutingAndLC, OtherDistributing} for work in this area as well as~\cite{graphStatesSecretSharing} for connections to quantum secret-sharing.

We note that one reason stabilizer circuits are interesting is that they can be simulated by an efficient classical algorithm; this is called the Gottesman-Knill Theorem~\cite{Gottesman98}. Several of these results about quantum circuit complexity were obtained as improvements to that algorithm~\cite{improvedSimulation04, fastSimulationGS06}. Overall, there are many reasons one might want an efficient quantum algorithm to construct a graph state, arising from the search for a practical quantum architecture. We refer the reader to~\cite{CLXLLH25, NGClifford2024, minimizingLCEquiv, JWFL26} for other interesting heuristic-based approaches.

\paragraph{Fundamentals.}
While graph states are more naturally introduced as \defn{stabilizer states}, we opt to give a simpler, more explicit introduction to make the paper more accessible. We also introduce some of the basics of quantum computing for convenience.

First of all, a \defn{qubit} is a complex vector in $\mathbb{C}^2$ of norm~$1$; this is the basic unit of computation. We fix an orthonormal basis of $\mathbb{C}^2$ known as the \defn{computational basis} for that qubit. In particular, we define $\ket{0} \coloneqq (1,0)$ and $\ket{1} \coloneqq (0,1)$. An \defn{$n$-qubit quantum state} is basically just a unit vector in $\mathbb{C}^{2^n}$, however, we need to keep track of the qubits (this is done by using Hilbert spaces). 

To do this we define \defn{tensor products}. For two qubits $(\alpha,\beta)$ and $(a,b)$, their tensor product is $(\alpha,\beta) \otimes (a,b)=(\alpha a, \alpha b, \beta a, \beta b)$. In general, when we take the tensor product of two vectors or matrices, their dimensions multiply. It is convenient to introduce some shorthand notation; for instance we write $\ket{01}\coloneqq\ket{0} \otimes \ket{1}=(1,0) \otimes (0,1)=(0, 1, 0, 0)$. So really we are just using binary strings of length $n$ to index entries in $\mathbb{C}^{2^n}$. Thus $\{\ket{s}: s \in \{0,1\}^n\}$ is an orthonormal basis for $\mathbb{C}^{2^n}$. Since we consider $\mathbb{C}^{2^n}$ as a tensor product of $n$ qubits, we write $\mathbb{C}^{\otimes n}$ instead.

Now, given an $n$-vertex graph $G=(V,E)$, we can define its corresponding \defn{graph state} $\ket{G} \in \mathbb{C}^{\otimes V}$. For each binary string $s \in \{0,1\}^V$, let $G_s$ be the subgraph of $G$ induced on the support of $s$. That is, we restrict our attention to the vertices of $G$ which index a $1$ in the string $s$ (and to the edges between these vertices). Let $|G_s|$ be the number of edges in $G_s$. We define\begin{align*}
\ket{G} = \frac{1}{\sqrt{2}^{n}}\sum_{s \in \{0,1\}^V} (-1)^{|G_s|} \ket{s}.
\end{align*}\noindent So we can think of $\ket{G}$ as a vector in $\mathbb{C}^{2^V}$ where each entry is either positive or negative $1/\sqrt{2}^{n}$ depending on the parity of the number of edges between the corresponding vertices.

A \defn{stabilizer circuit} is a circuit which uses only the Hadamard gate $H$, the phase gate $S$, and the controlled-$Z$ gate $CZ$, depicted below. The \defn{size} of a stabilizer circuit is the number of gates it contains. (Often $CNOT$ is used instead of $CZ$, but these definitions are equivalent up to a small multiplicative factor in the size of the circuit. Sometimes it is more important to minimize one type of gate than another; for instance $CZ$ gates might be considered more expensive since they act on two qubits and create entanglement. However, our theorem uses roughly the same number of $CZ$ gates as other gates. So for our purposes, we might as well bound the total number of gates.)

\begin{align*}
H=\frac{1}{\sqrt{2}}\begin{bmatrix}
1 & 1 \\
1 & -1 
\end{bmatrix}, { \hskip 1cm} S=\begin{bmatrix}
1 & 0 \\
0 & i
\end{bmatrix}, { \hskip 1cm} CZ=\begin{bmatrix}
1 & 0 & 0 & 0\\
0 & 1 & 0 & 0\\
0 & 0 & 1 & 0\\
0 & 0 & 0 & -1
\end{bmatrix}
\end{align*}
\noindent See Figure~\ref{fig:circuit} for an example. When a quantum gate only acts on qubits $u$ and $v$, for instance, it means that the action of the gate on $\ket{s}$ is determined by only the entries at the $u$ and $v$ positions. Thus, for instance, if we perform the $CZ$ gate on qubits $u$ and $v$, we write this gate as $CZ_{uv}$, and $CZ_{uv}\ket{s}$ is either $\ket{s}$ or $-\ket{s}$, where it is the latter if there is a $1$ at the positions of both $u$ and~$v$.

\begin{figure}
\centering
\[
\Qcircuit @C=1em @R=.7em {
\lstick{\ket{0}}  & \gate{H} & \ctrl{1} & \qw \\
\lstick{\ket{0}}  & \gate{H} & \gate{Z}    & \rstick{\raisebox{2.2em}{  $\ket{G}$\ }}\qw\gategroup{1}{4}{2}{4}{.8em}{\}}
}
\]
\caption{A stabilizer circuit which constructs the graph with two vertices and one edge.}
\label{fig:circuit}
\end{figure}

We can equivalently write a graph state as $\ket{G} = \prod_{uv\in E} CZ_{uv} \ket{+}^{\otimes V}$. That is, first we perform a Hadamard gate on each qubit in order to put it in the state $\ket{+}=H\ket{0}=(\ket{0}+\ket{1})/\sqrt{2}$. Then for each edge $uv$, we perform a controlled-$Z$ gate on qubits $u$ and $v$. So we naturally have a stabilizer circuit of size $n+m$ that constructs a graph state $\ket{G}$ with $n$ vertices and $m$ edges from $\ket{0}^{\otimes V}$. Our goal is to beat this bound for graphs with many edges using biclique decompositions.

\paragraph{Graph operations.}
In order to define the stabilizer circuit in a convenient way, we need to introduce certain graph operations. The connection between these operations and quantum computing was first discovered by Van den Nest, Dehaene, and De Moor~\cite{LCEquiv}. 

First of all, \defn{locally complementing} at a vertex $v$ of a graph $G$ replaces the induced subgraph on the neighborhood of $v$ by its complement. We write $G*v$ for the graph obtained from $G$ by locally complementing at $v$. So for any pair of neighbors $a$ and $b$ of $v$ in $G$, we ``switch'' adjacency between $a$ and~$b$. The next lemma follows from~\cite{LCEquiv}, and we omit the proof. 

\begin{lemma}[see~\cite{LCEquiv}]
\label{lem:localComplStabilizer}
    For any graph state $\ket{G}$ and any vertex $v$ of $G$, the graph state $\ket{G*v}$ can be obtained from $\ket{G}$ by performing the gate $HSH$ on the qubit of $v$ and then performing the gate $S$ on each neighbor of $v$ in $G$.
\end{lemma}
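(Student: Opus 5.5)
The plan is to work with stabilizers rather than amplitudes. I will use the standard fact that $\ket{G}$ is, up to global phase, the unique common $+1$ eigenvector of the $n$ commuting Pauli operators $K_u = X_u \prod_{w\in N_G(u)} Z_w$, one for each $u\in V$. This follows directly from the amplitude formula: flipping bit $u$ of $s$ changes $|G_s|$ by $|N(u)\cap \mathrm{supp}(s)|$, and that parity sign is exactly what $\prod_{w\in N(u)}Z_w$ records. It then suffices to show that the proposed local unitary $U = (HSH)_v \prod_{u\in N(v)} S_u$ conjugates the stabilizer group of $G$ onto the stabilizer group of $G*v$. Then $U\ket{G}$ is a $+1$ eigenvector of every $K'_u$ for $G*v$, so it equals $\ket{G*v}$ up to a global phase.

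The computation uses the single-qubit conjugation rules. For $S$: $SXS^\dagger = Y$ and $SZS^\dagger = Z$. For $HSH$: it fixes $X$, and since $HZH=X$ and $HYH=-Y$, it sends $Z \mapsto HSXS^\dagger H = HYH = -Y$. I will then take the generators in turn.
\begin{itemize}
\item $K_v = X_v Z_{N(v)}$: $X_v$ is fixed and the $Z$'s are fixed, so $K_v$ is invariant. This matches $K'_v$, since $N_{G*v}(v)=N_G(v)$.
\item $u\notin N(v)\cup\{v\}$: $K_u$ acts only with $Z$ on $N(u)$, possibly including $v$ if $u\sim v$, which is excluded here. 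Hence $K_u$ is unchanged, which matches $K'_u$.
\item $u\in N(v)$: $K_u = X_u Z_v \prod_{w\in N(u)\setminus v} Z_w$ is sent to $Y_u (-Y_v)\prod Z_w$. Multiplying by the invariant $K_v$ turns $Y_v$ into $Z_v$ up to a scalar, and turns the $Z$'s on $N(v)$ into a symmetric-difference update of $u$'s neighbourhood within $N(v)$. This is exactly local complementation, and $Y_u$ combines with $Z_u$ from $K_v$ into $X_u$ up to $\pm i$.
\end{itemize}

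The main obstacle is tracking the scalar in the last case. The product of the conjugated $K_u$ with $K_v$ gives $\pm K'_u$, and the whole lemma hinges on this sign being $+$. I will compute it explicitly using $Y=iXZ$. If it comes out as $-K'_u$ for every $u\in N(v)$, then $U\ket{G}$ is the graph state of $G*v$ with a $Z$ applied on each neighbour of $v$. Equivalently, using $K_v$, it is $X_v\ket{G*v}$. This is the convention-dependent point: whether $S$ or $S^\dagger$ appears on the neighbours. I would first check the two-vertex case numerically to pin down the sign.

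If the stated gates give $\ket{G*v}$ only up to this Pauli frame, I will note that the discrepancy is removed by $Z=S^2$ on each neighbour, or equivalently by using $S^\dagger=S^3$. This changes the gate count by at most a constant factor per neighbour, so every circuit-size bound later derived from the lemma is unaffected. In the write-up I will present the stabilizer conjugation table, the three cases above, and the explicit sign check.
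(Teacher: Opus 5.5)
The paper gives no proof of this lemma; it defers to~\cite{LCEquiv}. Your stabilizer-conjugation argument is the standard route there. Your setup is correct: the generators $K_u$, the rules $SXS^\dagger=Y$, $(HSH)X(HSH)^\dagger=X$ and $(HSH)Z(HSH)^\dagger=-Y$, and your three cases.

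The sign you leave open, however, is not a matter of convention. The paper fixes $S=\mathrm{diag}(1,i)$ and the amplitude formula for $\ket{G}$, and with those the sign comes out against the statement. For $u\in N(v)$, write $UK_uU^\dagger=Y_u(-Y_v)\prod_{w\in N(u)\setminus\{v\}}Z_w$ and multiply by $K_v$. Qubit $u$ gives $Y_uZ_u=iX_u$ and qubit $v$ gives $(-Y_v)X_v=iZ_v$, so
\[
UK_uU^\dagger K_v=-X_uZ_v\prod_{y\in(N(u)\setminus\{v\})\triangle(N(v)\setminus\{u\})}Z_y=-K'_u .
\]
Hence the gates in the statement produce $\prod_{u\in N(v)}Z_u\ket{G*v}=X_v\ket{G*v}$. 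This state is orthogonal to $\ket{G*v}$ as soon as $v$ has a neighbour, because $X_v$ anticommutes with $K'_u$ for any neighbour $u$. So the lemma as literally written is false.

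Your planned two-vertex check confirms it. Take $G$ to be a single edge $uv$, so $G*v=G$. Write $\ket{G}=\tfrac{1}{\sqrt2}(\ket{0}\ket{+}+\ket{1}\ket{-})$ with first factor $u$. Since $HSH\ket{+}=\ket{+}$ and $HSH\ket{-}=i\ket{-}$, the map $(HSH)_vS_u$ sends $\ket{G}$ to $\tfrac{1}{\sqrt2}(\ket{0}\ket{+}-\ket{1}\ket{-})=Z_u\ket{G}$, not to $\ket{G}$.

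Your repair is the right one, and you should state it as the lemma rather than leave it conditional. Either of two choices works:
\begin{itemize}
\item Use $S^\dagger=S^3$ on each neighbour and keep $HSH$ on $v$. Up to a global phase this is the usual operator $e^{-i\pi X_v/4}\prod_{u\in N(v)}e^{i\pi Z_u/4}$.
\item Keep $S$ on the neighbours and use $HS^\dagger H$ on $v$. This is cheaper, since $HS^\dagger H=X\,HSH$ and you have shown the stated gates yield $X_v\ket{G*v}$.
\end{itemize}
With either choice your computation gives $+K'_u$, and the proof is complete. The fix costs only $O(1)$ extra gates per neighbour (or $O(1)$ in total for the second choice). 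So Lemma~\ref{lem:pivotStabilizer} and Theorem~\ref{thm:quantumCircuit}, which use only the $O(\deg v)$ gate count, are unaffected.
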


Thus, if $v$ has $k$ neighbors in $G$, then there is a stabilizer circuit of size $O(k)$ which builds $\ket{G*v}$ from $\ket{G}$. We now need one more operation. For an edge $uv$ of a graph $G$, \defn{pivoting} at $uv$ results in the graph $G*u*v*u$; this
operation is well-defined since $G*u*v*u = G*v*u*v$ by~\cite[Corollary~2.2]{RWAndVM}. We denote
this new graph by $G \times uv$. Equivalently, $G \times uv$ is obtained from $G$ by switching the labels of $u$ and $v$, and then switching the adjacency between any pair of vertices $a$ and $b$ in $(N(u) \cup N(v) )\setminus \{u,v\}$ which have an odd number of common neighbors in $\{u,v\}$ (see~\cite[Proposition~2.1]{RWAndVM}). See Figure~\ref{fig:pivot} for a depiction of this operation.

\begin{figure}
\centering
\begin{tikzpicture}[scale=.7, every node/.style={MyNode}]
    \def \r {.6}
    \def \x {1.5}
    \def \y {.9}
    \def \start {90}
    \node[label=above:{$u$}] (u) at (-0.9, 1.75) {};
    \node[label=above:{$v$}] (v) at (0.9, 1.75) {};
    \draw[thick, fill=white] (0,.4) circle (\r) {};
    \draw[thick, fill=white] (-\x, -\y) circle (\r) {};
    \draw[thick, fill=white] (\x, -\y) circle (\r) {};
    \draw[thick] (u) -- (v);
    \draw[thick] (u) -- (-\x-\r+.025, -\y+.2);
    \draw[thick] (u) -- (-\x+\r, -\y);
    \draw[thick] (v) -- (\x-\r, -\y);
    \draw[thick] (v) -- (\x+\r-.025, -\y+.2);
    \draw[thick] (u) -- (-\r, 0.4);
    \draw[thick] (u) -- (\r-.145, 0.8);
    \draw[thick] (v) -- (-\r+.145, 0.8);
    \draw[thick] (v) -- (\r, 0.4);
    \draw[thick] (-\x-\r, -\y) to [bend right=85, looseness=1.25] (\x+\r, -\y);
\end{tikzpicture}\hskip 2cm
\begin{tikzpicture}[scale=.9, every node/.style={MyNode}]
    \node[draw=none, fill=none] at (0, -2) {};
    \node[rectangle, draw=none, fill=none, label={[yshift=-.15cm, above]{$\times uv$}}] (center) at (0, .15) {\Large$\longrightarrow$};
\end{tikzpicture}\hskip 2cm
\begin{tikzpicture}[scale = .7, every node/.style={MyNode}]
    \def \r {.6}
    \def \x {1.5}
    \def \y {.9}
    \def \thick {26pt} 
    \def \start {90}
    \node[label=above:{$v$}] (u) at (-0.9, 1.75) {};
    \node[label=above:{$u$}] (v) at (0.9, 1.75) {};
    \draw[line width=\thick, color=red, opacity=.5] (0,.4) -- (-\x, -\y);
    \draw[line width=\thick, color=red, opacity=.5] (0,.4) -- (\x, -\y);
    \draw[line width=\thick, color=red, opacity=.5] (-\x, -\y) -- (\x, -\y);
    \draw[thick, fill=white] (0,.4) circle (\r) {};
    \draw[thick, fill=white] (-\x, -\y) circle (\r) {};
    \draw[thick, fill=white] (\x, -\y) circle (\r) {};
    \draw[thick] (u) -- (v);
    \draw[thick] (u) -- (-\x-\r+.025, -\y+.2);
    \draw[thick] (u) -- (-\x+\r, -\y);
    \draw[thick] (v) -- (\x-\r, -\y);
    \draw[thick] (v) -- (\x+\r-.025, -\y+.2);
    \draw[thick] (u) -- (-\r, 0.4);
    \draw[thick] (u) -- (\r-.145, 0.8);
    \draw[thick] (v) -- (-\r+.145, 0.8);
    \draw[thick] (v) -- (\r, 0.4);
    \draw[thick] (-\x-\r, -\y) to [bend right=85, looseness=1.25] (\x+\r, -\y);
\end{tikzpicture}\\
\begin{tikzpicture}[scale=.7, every node/.style={MyNode}]
    \def \r {.6}
    \def \x {1.5}
    \def \y {.9}
    \def \start {90}
    \node[rectangle, draw=none, fill=none, label={[yshift=-.15cm, left]{$*v$}}] (center) at (0, 3) {\Large$\downarrow$};
    \node[label=above:{$u$}] (u) at (-0.9, 1.75) {};
    \node[label=above:{$v$}] (v) at (0.9, 1.75) {};
    \draw[thick, fill=white] (0,.4) circle (\r) {};
    \draw[thick, fill=white] (-\x, -\y) circle (\r) {};
    \draw[thick, fill=white] (\x, -\y) circle (\r) {};
    \draw[thick] (u) -- (v);
    \def \thick {26pt} 
    \draw[line width=\thick, color=red, opacity=.5] (0,.4) -- (\x, -\y);
    \draw[thick, fill=white] (0,.4) circle (\r) {};
    \draw[thick, fill=red, opacity=.5] (0,.4) circle (\r) {};
    \draw[thick, fill=white] (\x, -\y) circle (\r) {};
    \draw[thick, fill=red, opacity=.5] (\x, -\y) circle (\r) {};
    \draw[thick] (u) -- (v);
    \draw[thick] (u) -- (-\x-\r+.025, -\y+.2);
    \draw[thick] (u) -- (-\x+\r, -\y);
    \draw[thick] (v) -- (\x-\r, -\y);
    \draw[thick] (v) -- (\x+\r-.025, -\y+.2);
    \draw[thick] (u) -- (\x-\r, -\y);
    \draw[thick] (u) to [bend left=15] (\x+\r-.025, -\y+.2);
    \draw[thick] (v) -- (-\r+.145, 0.8);
    \draw[thick] (v) -- (\r, 0.4);
    \draw[thick] (-\x-\r, -\y) to [bend right=85, looseness=1.25] (\x+\r, -\y);
\end{tikzpicture}\hskip 2cm
\begin{tikzpicture}[scale=.9, every node/.style={MyNode}]
    \node[draw=none, fill=none] at (0, -2) {};
    \node[rectangle, draw=none, fill=none, label={[yshift=-.15cm, above]{$*u$}}] (center) at (0, .15) {\Large$\longrightarrow$};
\end{tikzpicture}\hskip 2cm
\begin{tikzpicture}[scale = .7, every node/.style={MyNode}]
    \def \r {.6}
    \def \x {1.5}
    \def \y {.9}
    \def \thick {26pt} 
    \def \start {90}
    \node[rectangle, draw=none, fill=none, label={[yshift=-.3cm, left]{$*v$}}] (center) at (0, 3) {\Large$\uparrow$};
    \node[label=above:{$u$}] (u) at (-0.9, 1.75) {};
    \node[label=above:{$v$}] (v) at (0.9, 1.75) {};
    \draw[line width=\thick, color=red, opacity=.5] (0,.4) -- (\x, -\y);
    \draw[line width=\thick, color=red, opacity=.5] (-\x, -\y) -- (\x, -\y);
    \draw[thick, fill=white] (0,.4) circle (\r) {};
    \draw[thick, fill=red, opacity=.5] (0,.4) circle (\r) {};
    \draw[thick, fill=white] (-\x, -\y) circle (\r) {};
    \draw[thick, fill=red, opacity=.5] (-\x, -\y) circle (\r) {};
    \draw[thick, fill=white] (\x, -\y) circle (\r) {};
    \draw[thick] (u) -- (v);
    \draw[thick] (u) -- (-\x-\r+.025, -\y+.2);
    \draw[thick] (u) -- (-\x+\r, -\y);
    \draw[thick] (v) -- (-\r+.145, 0.8);
    \draw[thick] (v) -- (\r, 0.4);
    \draw[thick] (u) -- (\x-\r, -\y);
    \draw[thick] (u) to [bend left=15] (\x+\r-.025, -\y+.2);
    \draw[thick] (v) to [bend right=15] (-\x-\r+.025, -\y+.2);
    \draw[thick] (v) -- (-\x+\r, -\y);
    \draw[thick] (-\x-\r, -\y) to [bend right=85, looseness=1.25] (\x+\r, -\y);
\end{tikzpicture}
\vspace{-.25cm}
\caption{A depiction of pivoting with the ``switched'' edges/non-edges highlighted in red.}
\label{fig:pivot}
\end{figure}

By considering pivoting as a sequence of three local complementations as depicted in Figure~\ref{fig:pivot}, and by applying Lemma~\ref{lem:localComplStabilizer}, we obtain the following lemma. 

\begin{lemma}
\label{lem:pivotStabilizer}
    For any graph state $\ket{G}$ and any edge $uv$ of $G$ so that $u$ has $k_1$ neighbors and $v$ has $k_2$ neighbors, the graph state $\ket{G\times uv}$ can be obtained from $\ket{G}$ by a stabilizer circuit of size $O(k_1+k_2)$.
\end{lemma}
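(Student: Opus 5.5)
The plan is to write $G\times uv = G*u*v*u$ and implement the three local complementations one after another, using Lemma~\ref{lem:localComplStabilizer} for each step. The only thing to control is the degree of the vertex we complement at, measured in the graph current at that step. Each application of Lemma~\ref{lem:localComplStabilizer} at a vertex $x$ of current degree $k$ costs one $HSH$ gate (three gates) plus $k$ gates $S$, so $O(k+1)$ gates.

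First I will track how degrees change. Locally complementing at $x$ only toggles edges between pairs of neighbors of $x$; it never changes $N(x)$ itself. Let $G_1=G*u$. Then $N_{G_1}(u)=N_G(u)$, so the first step costs $O(k_1)$.

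For the second step I need $N_{G_1}(v)$. Since $v\in N_G(u)$, every edge between $v$ and another neighbor $a$ of $u$ is toggled. Hence
\[
N_{G_1}(v) = \bigl(N_G(v)\,\triangle\,(N_G(u)\setminus\{v\})\bigr)\cup\{u\},
\]
and so $|N_{G_1}(v)|\le k_1+k_2$. The second step therefore costs $O(k_1+k_2)$.

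For the third step, let $G_2=G_1*v$. Then $N_{G_2}(u)$ is obtained from $N_{G_1}(u)=N_G(u)$ by toggling with $N_{G_1}(v)\setminus\{u\}$, because $u\in N_{G_1}(v)$. This set has size at most $k_1+(k_1+k_2)$, so the third step also costs $O(k_1+k_2)$. Summing the three steps gives a circuit of size $O(k_1+k_2)$. This agrees with the alternative description: all modifications happen within $N(u)\cup N(v)$, which has size at most $k_1+k_2$.

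The step I would check most carefully is the degree bookkeeping after the first complementation, namely that $v$ and then $u$ keep degree $O(k_1+k_2)$ in the intermediate graphs. The crude symmetric-difference bounds above suffice, so no exact description of the intermediate neighborhoods is needed.

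One formal point remains. The pivot is defined with the labels of $u$ and $v$ switched relative to $G*u*v*u$, or alternatively is taken to be exactly $G*u*v*u$ as a graph. If a relabeling is required, it is a qubit relabeling, which either costs nothing or amounts to a SWAP implementable with three $CNOT$s, that is, $O(1)$ stabilizer gates. So in either case the total is $O(k_1+k_2)$.
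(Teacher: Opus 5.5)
Your proof is correct and follows the paper's argument. The paper writes the pivot as the three local complementations $G*u*v*u$ (as in Figure~\ref{fig:pivot}) and applies Lemma~\ref{lem:localComplStabilizer} to each step; your bounds $|N_{G*u}(v)|\le k_1+k_2$ and $|N_{G*u*v}(u)|\le 2k_1+k_2$ just make explicit what the paper leaves implicit. Your closing hedge about relabeling qubits is unnecessary: the paper defines $G\times uv$ to be exactly $G*u*v*u$, and ``switching the labels'' is only part of an equivalent description of that same graph.
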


\paragraph{The main result.}
Now we show how to construct small stabilizer circuits for graphs with small biclique decompositions. The proof of Theorem~\ref{thm:quantumCircuit} is effective; it yields an efficient classical algorithm to construct the desired stabilizer circuit given the graph $G$ and its biclique decomposition~$\mathcal{B}$.

\begin{theorem}
\label{thm:quantumCircuit}
Every $n$-qubit graph state $\ket{G}$ so that $G$ has a biclique decomposition of size $k$ can be constructed by a stabilizer circuit of size $O(k+n)$.
\end{theorem}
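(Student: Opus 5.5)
The plan is to work directly in the computational basis and realise each biclique by a diagonal phase. The tool is the identity $\ket{G} = \prod_{uv\in E} CZ_{uv}\ket{+}^{\otimes V}$. We first apply $H$ to every qubit to obtain $\ket{+}^{\otimes V}=\frac{1}{\sqrt2^{n}}\sum_s\ket{s}$. It then suffices to apply, for each biclique $B\in\mathcal{B}$ with sides $X_B,Y_B$, a circuit $U_B$ of size $O(|X_B|+|Y_B|)$ satisfying
\[
U_B\ket{s}=(-1)^{|E_B\cap E(G_s)|}\ket{s}\quad\text{for every } s,
\]
where $E_B$ is the edge set of $B$. Since $\mathcal{B}$ is a decomposition, every edge of $G$ lies in exactly one $E_B$. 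The $U_B$ are all diagonal, so they commute, and their product multiplies $\ket{s}$ by $(-1)^{|G_s|}$. This gives $\ket{G}$ with total size $n+O(\sum_B |V(B)|)=O(n+k)$.

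The key step is building $U_B$. Because $B$ is a complete bipartite graph with disjoint sides $X=X_B$ and $Y=Y_B$, the number of edges of $B$ inside the support of $s$ is
\[
\sum_{x\in X,\,y\in Y}s_x s_y=\Big(\sum_{x\in X}s_x\Big)\Big(\sum_{y\in Y}s_y\Big),
\]
and modulo $2$ this is the product of the parities $p_X(s)=\bigoplus_{x\in X}s_x$ and $p_Y(s)=\bigoplus_{y\in Y}s_y$. So the required phase is $(-1)^{p_X(s)p_Y(s)}$, which a single $CZ$ produces once the two parities are held on two qubits. Fix $x_0\in X$ and $y_0\in Y$. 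Apply $CNOT_{x\to x_0}$ for every $x\in X\setminus\{x_0\}$ and $CNOT_{y\to y_0}$ for every $y\in Y\setminus\{y_0\}$. These permute basis states so that $x_0$ now holds $p_X(s)$ and $y_0$ holds $p_Y(s)$; the targets are distinct from the controls because $X\cap Y=\emptyset$. Next apply $CZ_{x_0y_0}$, and then repeat the same $CNOT$s to undo the permutation. The net effect on $\ket{s}$ is exactly multiplication by $(-1)^{p_X(s)p_Y(s)}$. This uses $2(|X|+|Y|-2)$ $CNOT$s and one $CZ$.

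Finally, each $CNOT$ equals $CZ$ conjugated by $H$ on the target, so it costs three gates from the allowed set $\{H,S,CZ\}$. The sizes therefore change only by constant factors, and the whole circuit has size $O(n+k)$. It is also computed efficiently from $G$ and $\mathcal{B}$.

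The main point to verify is that the parity trick yields exactly the edge count of $B$ modulo $2$. That is the displayed factorisation, which relies on $B$ being complete between $X$ and $Y$ with $X\cap Y=\emptyset$. The other point to check is that using exact edge-disjointness of the decomposition (rather than a cover) is what makes the phases multiply to $(-1)^{|G_s|}$. With this approach the local complementation and pivoting lemmas (Lemmas~\ref{lem:localComplStabilizer} and~\ref{lem:pivotStabilizer}) are not needed. I initially tried to toggle each biclique by adding the star edges at $x_0$ and $y_0$ and pivoting on $x_0y_0$, but that costs the current degrees of $x_0$ and $y_0$ and also toggles unwanted pairs, which is why I switched to the diagonal-phase approach.
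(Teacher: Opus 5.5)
Your proof is correct, and it takes a different route from the paper. The paper stays in the graph-operation picture. It fixes two vertices $u,v$ of $G$, deletes all edges at $u$ and $v$ to obtain $\widetilde{G}$ (restoring them at the end with $O(n)$ $CZ$ gates), and inserts the bicliques one at a time. For a biclique with sides $X,Y$, it uses $CZ$ gates to join $u$ to $v$ and to $X$, and $v$ to $Y$. It then pivots on $uv$; by Lemmas~\ref{lem:localComplStabilizer} and~\ref{lem:pivotStabilizer} this costs $O(|V(B)|)$ single-qubit gates and toggles exactly the $X$--$Y$ pairs, apart from exchanging the roles of $u$ and $v$. Finally it deletes the edges at $u$ and $v$ again. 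Keeping $u,v$ isolated between steps is what cures the degree and spurious-toggle problem you ran into when pivoting on $x_0y_0$.

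You instead work diagonally in the computational basis, realising the phase $(-1)^{p_X(s)p_Y(s)}$ by computing the parities in place with CNOT ladders around a single $CZ$. The mechanisms are related: the paper's hub qubits $u,v$ effectively pick up $p_X$ and $p_Y$ by phase kickback from the added $CZ$ gates, the role your $x_0,y_0$ play in place. What your version buys:
\begin{itemize}
\item it is self-contained, needing neither the local complementation rule of Van den Nest, Dehaene, and De Moor nor Oum's description of pivoting;
\item it needs no hub vertices and no detour through $\widetilde{G}$;
\item it uses only $H$ and $CZ$ gates;
\item it produces $\ket{G}$ exactly, with no global phases or local-Clifford conventions to track.
\end{itemize}
The paper's route in return stays within the vertex-minor language of the graph-state literature it builds on, and every entangling gate in its circuit acts on one of the two fixed qubits $u,v$. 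Both arguments only use that each edge lies in an odd number of bicliques, so exact disjointness is slightly more than you need.
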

\begin{proof}
    We may assume that $G$ has at least two vertices. Let $u$ and $v$ be any two vertices of $G$, and let $\widetilde{G}$ be the graph obtained from $G$ by removing all of the edges incident to $u$ or~$v$. 
    
    We first show how to find a stabilizer circuit for $G$ given a stabilizer circuit for $\widetilde{G}$. For this step, we simply perform a $CZ$ gate on each pair of vertices $x$ and $y$ so that $xy$ is an edge of $G$ but not of $\widetilde{G}$. This step uses $O(n)$ gates.

    So it is enough to find a stabilizer circuit constructing $\widetilde{G}$. Note that there is a biclique decomposition $\mathcal{B}$ of $\widetilde{G}$ of size at most $k$. (We can obtain $\mathcal{B}$ by starting with a biclique decomposition of $G$ of size at most $k$ and then removing $u$ and $v$ from each of its bicliques.) Let $B$ be a biclique in $\mathcal{B}$, and suppose that we have a stabilizer circuit which constructs the graph $\widetilde{G}- E_B$ which is obtained from $\widetilde{G}$ by deleting all of the edges $E_B$ in $B$. We now show how to perform some additional $H$, $S$, and $CZ$ gates in order to obtain $\widetilde{G}$ from $\widetilde{G}- E_B$. A similar process can then be repeated to build $\widetilde{G}$ from the graph state with no edges, deleting one biclique at a time. (Note that the graph state with no edges can be constructed from the state $\ket{0}^{\otimes V}$ by performing $n$ Hadamard gates.)

    First, we perform $(|V(B)|+1)$-many $CZ$ gates to $\ket{\widetilde{G}-E_B}$ in order to make $u$ and $v$ adjacent, to make $u$ adjacent to one side of $B$, and to make $v$ adjacent to the other side of $B$. Thus, in this graph, the sum of the degrees of $u$ and $v$ equals $|V(B)|+2$. So by Lemma~\ref{lem:pivotStabilizer}, there is a stabilizer circuit of size $O(|V(B)|)$ which performs a pivot on the edge $uv$. Recall that pivoting switches the labels of $u$ and $v$, and switches adjacency between pairs of vertices which have an odd number of common neighbors in $u$ and $v$. Finally, we again perform $(|V(B)|+1)$-many $CZ$ gates in order to remove all of the edges incident to $u$ and $v$. In this manner we obtain a stabilizer circuit with $O(|V(B)|)$ many gates which constructs $\ket{\widetilde{G}}$ from $\ket{\widetilde{G}- E_B}$.

    The total size of the stabilizer circuit we have described is $O(k+n)$, as desired.
\end{proof}

\paragraph{Applications to classes of graphs.}

Now we show that interval graphs, circle graphs, and graphs of bounded rank-width have small biclique decompositions, thus recovering three of the results of~\cite{DJ25, KMY25} up to logarithmic factors. Graph classes of bounded rank-width also have bounded twin-width, so they have biclique decompositions of size $O(n \log^2{n})$ by Corollary~\ref{cor:width}. So it just remains to consider interval graphs and circle graphs.

Given a finite collection $\mathcal{I}$ of intervals on the real line, the corresponding \defn{interval graph} $G(\mathcal{I})$ has vertex set $\mathcal{I}$, and two vertices are adjacent if they intersect as intervals (i.e., if they have non-empty intersection). Similarly, the corresponding \defn{circle graph} (also called an \defn{overlap graph}) $\mathcal{C}(\mathcal{I})$ has vertex set $\mathcal{I}$, and two vertices are adjacent if they \defn{overlap} as intervals, meaning that they intersect but neither is contained in the other. We now prove that these graphs have reasonably small biclique decompositions, which will complete our quantum circuit complexity bounds. The proof relies on elementary results from~\cite{CY25}.

\begin{lemma}
\label{lem:intervalCircle}
    Every $n$-vertex interval graph has a biclique decomposition of size $O(n \log^2{n})$, and every $n$-vertex circle graph has a biclique decomposition of size $O(n \log^3{n})$.
\end{lemma}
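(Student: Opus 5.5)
We prove both bounds via the contiguity machinery of Lemma~\ref{lem:bipbc_from_cont} and Theorem~\ref{thm:bc_from_avg_cont}, combined with divide-and-conquer on the real line (a segment tree over endpoints), in the spirit of~\cite{CY25}.

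\emph{Interval graphs.} Sort the $2n$ endpoints and build a balanced binary tree over them. Each interval $I$ is stored at its \emph{canonical node}: the lowest node $t$ whose range contains $I$, i.e.\ where $I$ crosses the split point $x_t$ of $t$. Two intervals $I,J$ intersect exactly when one of them, say $I$ stored at $t$, satisfies one of the following: $J$ is also stored at $t$ (both contain $x_t$, so they intersect); or $J$ is stored in a descendant of $t$ and meets $I$. In the first case, the intervals stored at $t$ form a clique. A clique on $k$ vertices has a biclique decomposition of size $O(k\log k)$ by recursive halving. Since each interval is stored at one node, this contributes $O(n\log n)$ in total.

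For the second case, suppose $J$ is stored in the left subtree of $t$. Then $J$ meets $I\ni x_t$ if and only if the right endpoint of $J$ is at least the left endpoint of $I$. Order the intervals stored in the left subtree by right endpoint. The neighborhood of $I$ in this set is then a single suffix, so this bipartite graph has contiguity $1$. By Lemma~\ref{lem:bipbc_from_cont}, applied to the bipartite ordering with the dyadic tree built only on the side being ordered, it has a decomposition of size $O(m_t\log n)$, where $m_t$ is the number of intervals stored at $t$ or in its subtree. The right subtree is handled symmetrically. Every interval lies in the subtree of $O(\log n)$ nodes, so $\sum_t m_t=O(n\log n)$, and the total size is $O(n\log^2 n)$. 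The decomposition is exact, since each edge is assigned to the unique canonical node of the higher-stored interval, or to the common node if both are stored at the same one.

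\emph{Circle graphs.} Here an overlap between $I$ stored at $t$ and $J$ stored in the left subtree requires $\mathrm{left}(I)<\mathrm{left}(J)$ together with $\mathrm{left}(I)\le\mathrm{right}(J)$; the remaining condition $\mathrm{right}(J)<\mathrm{right}(I)$ is automatic. This is a two-sided dominance condition in two coordinates. Pairs stored at the same node $t$ overlap if and only if their left endpoints and right endpoints are in the same order, which is again a dominance relation. I would handle each such 2D dominance bipartite graph with one more level of divide and conquer on the first coordinate. Splitting by the median of the first coordinate, each half-pair reduces to a one-dimensional threshold condition, which has contiguity $1$ after sorting by the second coordinate and therefore costs $O(m\log n)$ via Lemma~\ref{lem:bipbc_from_cont}. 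This gives $O(m\log^2 n)$ per node and $O(n\log^3 n)$ overall.

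\emph{Main obstacle.} The hardest part is keeping the decomposition exact, with each edge covered once, rather than merely a cover. This means each overlapping pair must be charged to exactly one node and one split. It also means correctly treating ties and intervals that cross $x_t$ on both sides, so that no nested pair is included. I would fix a canonical charging rule (the highest node, then the first split separating the two coordinates) and check that the dominance reductions are exact.
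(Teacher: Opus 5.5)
Your architecture is the paper's. The segment tree over the $2n$ endpoints is the unrolled form of the paper's median-point recursion: the intervals stored at the root are $\mathcal{P}$, and the two subtrees carry $\mathcal{L}$ and $\mathcal{R}$. The intervals stored at a node form a clique (interval case) or a permutation graph (circle case), the node-to-subtree edges form one- or two-dimensional comparability bipartite graphs, and the recursion depth supplies the last logarithm. Where the paper cites \cite[Theorems 8 and 9]{CY25} for those pieces, you rebuild them from Lemma~\ref{lem:bipbc_from_cont} with contiguity $1$, plus one further divide-and-conquer on left endpoints. That is a sound, self-contained substitute. The interval-graph half is correct as written, including exactness and the $\sum_t m_t = O(n\log n)$ accounting.

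The circle-graph half has a concrete error: the overlap condition for $I$ stored at $t$ and $J$ stored in the left subtree of $t$ is reversed. Since $\mathrm{right}(J) < x_t < \mathrm{right}(I)$, the containment $J\subseteq I$ holds exactly when $\mathrm{left}(I)\le \mathrm{left}(J)$. Your condition $\mathrm{left}(I)<\mathrm{left}(J)$ therefore selects exactly the nested pairs. It also already implies $\mathrm{left}(I)\le\mathrm{right}(J)$, so the second requirement is vacuous and the condition is not two-sided at all. As stated, your construction decomposes the containment graph between $t$ and its left subtree and misses every overlap edge there.

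The correct condition is $\mathrm{left}(J) < \mathrm{left}(I) \le \mathrm{right}(J)$, i.e.\ the left endpoint of $I$ lies in $J$; this is the observation the paper uses. For the right subtree it is $\mathrm{left}(J)\le \mathrm{right}(I) < \mathrm{right}(J)$. With this fix your plan goes through:
\begin{itemize}
\item Split all left endpoints at their median.
\item Pairs with $J$ in the lower half and $I$ in the upper half need only $\mathrm{left}(I)\le\mathrm{right}(J)$. This is a suffix once the $J$'s are sorted by right endpoint, so Lemma~\ref{lem:bipbc_from_cont} costs $O(m\log n)$.
\item Pairs with $J$ upper and $I$ lower are never edges.
\item Recursing on the halves gives $O(m_t\log^2 n)$ per node and $O(n\log^3 n)$ in total, with each edge charged to a unique node and a unique split.
\end{itemize}
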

\begin{proof}
    Let $\mathcal{I}$ be a collection of $n$ intervals, and consider its interval graph $G(\mathcal{I})$ and its circle graph $\mathcal{C}(\mathcal{I})$. We may assume that no two intervals in $\mathcal{I}$ have a common endpoint by changing the endpoints slightly. Consider a point $p \in \mathbb{R}$ so that half of the endpoints are to the left of $p$ and half are to the right. Let $\mathcal{L}$ and $\mathcal{R}$, respectively, be the set of all intervals in $\mathcal{I}$ which are contained to the left and right, respectively, of $p$. Thus at most half of the intervals in $\mathcal{I}$ are in each of $\mathcal{L}$ and $\mathcal{R}$.

    We now show how to find a biclique decomposition of the graph obtained from $G(\mathcal{I})$ and $\mathcal{C}(\mathcal{I})$, respectively, by making both $\mathcal{L}$ and $\mathcal{R}$ independent sets. These decompositions will have size $O(n\log{n})$ and $O(n\log^2{n})$, respectively. This will complete the proof as we can then continue inductively with the intervals $\mathcal{L}$ and $\mathcal{R}$.

    First we consider the intervals $\mathcal{P} \coloneqq \mathcal{I}\setminus (\mathcal{L} \cup \mathcal{R})$ which contain the point $p$. In the case of interval graphs, these intervals form a clique, which has a biclique decomposition of size $O(n\log{n})$, as desired. In the case of circle graphs, these intervals induce a \defn{permutation graph}.
    Permutation graphs are exactly the comparability graphs of partially ordered sets of dimension two, and from a result of Cardinal and Yuditsky~\cite[Theorem 9]{CY25}, they have biclique decompositions of size $O(n\log^2{n})$, as desired.

    We then need to partition the edges corresponding respectively to intersections and overlaps between an interval of $\mathcal{P}$ and an interval of $\mathcal{L}$ or $\mathcal{R}$.
    Without loss of generality, let us just consider the edges involving intervals of $\mathcal{L}$.
    In the case of interval graphs, since all intervals of $\mathcal{P}$ contain the point $p$, 
    they intersect an interval $\iota$ of $\mathcal{L}$ only if their left endpoint is on the left of the right endpoint of $\iota$. These edges therefore form a bipartite one-dimensional comparability graph, which from \cite[Theorem 8]{CY25} admits a biclique decomposition of size $O(n\log n)$. In the case of circle graphs, the bipartite overlap graph between intervals of $\mathcal{P}$ and of $\mathcal{L}$ can be seen to be a bipartite two-dimensional comparability graph. Indeed, an interval from $\mathcal{P}$ overlaps an interval $\iota$ from $\mathcal{L}$ only if its left endpoint is contained in $\iota$. 
    Again, from \cite[Theorem 8]{CY25}, these admit a biclique decomposition of size $O(n\log^2 n)$.

    The induction on the left and right subgraphs induced by $\mathcal{L}$ and $\mathcal{R}$ incur an additional logarithmic factor. We therefore obtain biclique decompositions of size $O(n \log^2{n})$ for interval graphs, and of size $O(n \log^3{n})$ for circle graphs, as claimed.
\end{proof}

\subsection{Shortest paths}
\label{sec:apsp}

Feder and Motwani~\cite{FM95} explain how to compute breadth-first search trees on compact representations of graphs. We give an elementary proof.

\begin{lemma}[\cite{FM95}]
\label{lem:sssp}
Given a biclique cover of size $s$ of a graph $G$, the breadth-first search tree of any vertex of $G$ can be computed in time $O(s)$.
\end{lemma}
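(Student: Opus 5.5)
The plan is to simulate breadth-first search on an auxiliary graph in which every biclique of the cover is replaced by a small gadget. Every edge of $G$ must become a path of a fixed length, and the total number of edges must be $O(s)$.

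\textbf{Construction.} For each biclique $B\in\mathcal{B}$ with sides $X_B$ and $Y_B$, add two new ``hub'' vertices $h_B$ and $h'_B$ to the vertices of $G$. Add the arcs $x\to h_B$ for $x\in X_B$ and $h_B\to y$ for $y\in Y_B$. Symmetrically, add $y\to h'_B$ for $y\in Y_B$ and $h'_B\to x$ for $x\in X_B$. Call the resulting directed graph $H$. It has $n+2|\mathcal{B}|\le n+2s$ vertices and exactly $2s$ arcs. Isolated vertices can be handled trivially, so we may assume $n=O(s)$.

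\textbf{Distances.} First, there is a path $u\to w$ of length $2$ in $H$ between original vertices exactly when $uw$ is an edge of $G$. This holds because $\mathcal{B}$ is a cover, so every edge lies in some biclique; conversely, every such length-$2$ path goes through a hub whose biclique consists of edges of $G$. Second, every path in $H$ alternates between original vertices and hubs. Hence a path of length $2k$ between original vertices corresponds to a walk of length $k$ in $G$, and vice versa. It follows that $\mathrm{dist}_H(r,w)=2\,\mathrm{dist}_G(r,w)$ for every original vertex $w$.

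\textbf{Tree extraction.} Run an ordinary BFS from $r$ in $H$, which takes $O(|V(H)|+|E(H)|)=O(s)$ time. In the BFS tree of $H$, each original vertex $w\neq r$ has a hub as its parent, and that hub's parent is an original vertex $u$ with $\mathrm{dist}_G(r,u)=\mathrm{dist}_G(r,w)-1$ and $uw\in E(G)$. Set $u$ as the parent of $w$ in $G$. This yields a valid BFS tree of $G$ in $O(n)$ additional time.

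The only delicate point is the equivalence of distances. Specifically, we must check that hubs cannot create shortcuts. This follows from the alternation of vertex types together with the fact that each hub connects only pairs that are adjacent in $G$. Once that is verified, everything else is the standard BFS running-time bound. One can avoid building $H$ explicitly by marking each biclique side as ``used'' the first time it is scanned, which gives the same amortized $O(s)$ bound.
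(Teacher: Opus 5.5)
Your proposal is correct and follows the paper's proof: you use the same two-hub gadget per biclique, the same count of exactly $2s$ arcs, and the same observation that distances between original vertices exactly double, followed by a standard BFS in $O(s)$ time. Your explicit recovery of the tree in $G$ from the tree in $H$ (taking each vertex's grandparent as its parent) and your handling of isolated vertices so that $n=O(s)$ fill in small details that the paper leaves implicit.
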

\begin{proof}
One can encode the biclique cover as a directed graph as follows: Replace every biclique $A\times B$ in the cover by a directed graph involving two additional vertices, say $\alpha$ and $\beta$, and containing all directed edges from $A$ to $\alpha$ and from $\alpha$ to $B$, and also from $B$ to $\beta$ and from $\beta$ to $A$. We obtain a bipartite directed graph $D$ such that the distance between two vertices from $V(G)$ in $D$ is exactly twice their distance in $G$. By definition, the number of directed edges of $D$ is exactly equal to $2s$. Hence we can run the standard breadth-first search algorithm on $D$ in time $O(s)$ and recover a breadth-first search tree on $G$.
\end{proof}

By first computing a biclique cover using Theorem~\ref{thm:alg}, and then running a breadth-first search from every vertex using Lemma~\ref{lem:sssp}, we obtain the following.

\begin{theorem}
    There is a randomized algorithm running in time $O(n^2\log^2 n)$ for the all-pairs shortest paths problem on graphs of linear neighborhood complexity.
\end{theorem}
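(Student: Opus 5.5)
The plan is to chain three earlier results. Theorem~\ref{thm:alg} will produce a biclique decomposition, then Lemma~\ref{lem:sssp} will run breadth-first search from every vertex on that compact representation.

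First, I bound the total contiguity of a graph $G$ with linear neighborhood complexity. By Theorem~\ref{thm:widthcont}, $G$ has contiguity $\ell(G) = O(\log n)$. Hence $t(G) \le n\,\ell(G) = O(n\log n)$, since a single ordering achieving $\ell(G)$ gives $\sum_v \ell_\preceq(v)\le n\ell(G)$.

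Next, I apply Theorem~\ref{thm:alg}. In time $O(n^2\log n)$ it yields, with high probability, a biclique decomposition of size
\[
s = O(t(G)\log n) = O(n\log^2 n).
\]
Its running time does not depend on $t(G)$, so no knowledge of the class is required. A decomposition is in particular a biclique cover, so Lemma~\ref{lem:sssp} applies. For each of the $n$ source vertices it computes a breadth-first search tree, and hence all distances from that source, in time $O(s)=O(n\log^2 n)$. Over all sources this gives $O(n^2\log^2 n)$. Together with the $O(n^2\log n)$ preprocessing, the total is $O(n^2\log^2 n)$.

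Here all-pairs shortest paths means unweighted distances, which is what BFS computes; this matches the setting of the cited results. The distances in the auxiliary directed graph $D$ are twice those in $G$, so we halve them. Predecessors in $G$ are recovered by skipping the auxiliary vertices $\alpha,\beta$: a vertex of $G$ reached via $\alpha$ gets as parent the vertex of $G$ from which $\alpha$ was first reached.

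The only delicate point is the randomization. Theorem~\ref{thm:mstalg} succeeds with high probability, and if the approximate spanning tree is poor, only the size $s$ degrades, not correctness. Every ordering still gives a valid decomposition, and BFS on any cover is exact. So the algorithm is Monte Carlo only in its running time. It can be made Las Vegas by checking that $s$ is within the bound and rerunning otherwise, which preserves the expected $O(n^2\log^2 n)$ bound.
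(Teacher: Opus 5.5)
Your proposal is correct and follows the paper's own argument. You bound $t(G)\le n\,\ell(G)=O(n\log n)$ via Theorem~\ref{thm:widthcont}, compute a decomposition of size $O(n\log^2 n)$ with Theorem~\ref{thm:alg}, and run the $O(s)$-time breadth-first search of Lemma~\ref{lem:sssp} from each of the $n$ sources. Your extra remarks on recovering parents and on randomization are sound, with one caveat: the Las Vegas ``check and rerun'' step needs the class-dependent constant hidden in $O(n\log^2 n)$, which sits slightly at odds with your claim that no knowledge of the class is required.
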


This generalizes a result of Bonnet, Geniet, Kim, and Moon~\cite{BGKM26} for shortest paths on graphs of bounded twin-width. It also applies to graphs of bounded merge-width, while the algorithm described in~\cite{BGKM26} requires a witness of low merge-width. 

\section*{Acknowledgments}
Much of this work was completed at the 13th Annual Workshop on Geometry and Graphs held at the Bellairs Research Institute in February 2026, and at the 52nd International Workshop on Graph-Theoretic Concepts in Computer Science held in Kortrijk in June 2026. We are grateful to the organizers and participants for providing an excellent research environment. 

\bibliographystyle{alpha}
\bibliography{bicliques}

@article {MR1417566,
    AUTHOR = {Fishburn, Peter C. and Hammer, Peter L.},
     TITLE = {Bipartite dimensions and bipartite degrees of graphs},
   JOURNAL = {Discrete Math.},
  FJOURNAL = {Discrete Mathematics},
    VOLUME = {160},
      YEAR = {1996},
    NUMBER = {1-3},
     PAGES = {127--148},
      ISSN = {0012-365X,1872-681X},
   MRCLASS = {05C70},
  MRNUMBER = {1417566},
MRREVIEWER = {R.\ Balakrishnan},
       DOI = {10.1016/0012-365X(95)00154-O},
       URL = {https://doi.org/10.1016/0012-365X(95)00154-O},
}

@inproceedings{DKP24,
  author       = {Lech Duraj and
                  Filip Konieczny and
                  Krzysztof Potepa},
  editor       = {Timothy M. Chan and
                  Johannes Fischer and
                  John Iacono and
                  Grzegorz Herman},
  title        = {Better Diameter Algorithms for Bounded VC-Dimension Graphs and Geometric
                  Intersection Graphs},
  booktitle    = {32nd Annual European Symposium on Algorithms, {ESA} 2024, Royal Holloway,
                  London, United Kingdom, September 2-4, 2024},
  series       = {LIPIcs},
  pages        = {51:1--51:18},
  publisher    = {Schloss Dagstuhl - Leibniz-Zentrum f{\"{u}}r Informatik},
  year         = {2024},
  url          = {https://doi.org/10.4230/LIPIcs.ESA.2024.51},
  doi          = {10.4230/LIPICS.ESA.2024.51},
  bibsource    = {dblp computer science bibliography, https://dblp.org}
}

@inproceedings{CCGKLZ25,
  author       = {Timothy M. Chan and
                  Hsien{-}Chih Chang and
                  Jie Gao and
                  S{\'{a}}ndor Kisfaludi{-}Bak and
                  Hung Le and
                  Da Wei Zheng},
  title        = {Truly Subquadratic Time Algorithms for Diameter and Related Problems
                  in Graphs of Bounded VC-dimension},
  booktitle    = {66th {IEEE} Annual Symposium on Foundations of Computer Science, {FOCS}
                  2025, Sydney, Australia, December 14-17, 2025},
  pages        = {2728--2765},
  publisher    = {{IEEE}},
  year         = {2025},
  url          = {https://doi.org/10.1109/FOCS63196.2025.00140},
  doi          = {10.1109/FOCS63196.2025.00140},
  bibsource    = {dblp computer science bibliography, https://dblp.org}
}

@article{KCA09,
  author       = {Chinmay Karande and
                  Kumar Chellapilla and
                  Reid Andersen},
  title        = {Speeding Up Algorithms on Compressed Web Graphs},
  journal      = {Internet Math.},
  volume       = {6},
  number       = {3},
  pages        = {373--398},
  year         = {2009},
  url          = {https://doi.org/10.1080/15427951.2009.10390646},
  doi          = {10.1080/15427951.2009.10390646},
  bibsource    = {dblp computer science bibliography, https://dblp.org}
}

@inproceedings{BC08,
  author       = {Gregory Buehrer and
                  Kumar Chellapilla},
  editor       = {Marc Najork and
                  Andrei Z. Broder and
                  Soumen Chakrabarti},
  title        = {A scalable pattern mining approach to web graph compression with communities},
  booktitle    = {Proceedings of the International Conference on Web Search and Web
                  Data Mining, {WSDM} 2008, Palo Alto, California, USA, February 11-12,
                  2008},
  pages        = {95--106},
  publisher    = {{ACM}},
  year         = {2008},
  url          = {https://doi.org/10.1145/1341531.1341547},
  doi          = {10.1145/1341531.1341547},
  bibsource    = {dblp computer science bibliography, https://dblp.org}
}

@article{DK26,
  author       = {Jan Dreier and
                  Clemens Kuske},
  title        = {Near-Linear Time Computation of Welzl Orders on Graphs with Linear
                  Neighborhood Complexity},
  journal      = {CoRR},
  volume       = {abs/2602.14625},
  year         = {2026},
  url          = {https://doi.org/10.48550/arXiv.2602.14625},
  doi          = {10.48550/ARXIV.2602.14625},
  eprinttype   = {arXiv},
  eprint       = {2602.14625},
  bibsource    = {dblp computer science bibliography, https://dblp.org}
}

@article{BGKM26,
  author       = {{\'{E}}douard Bonnet and
                  Colin Geniet and
                  Eun Jung Kim and
                  Sungmin Moon},
  title        = {Fast Shortest Path in Graphs With Sparse Signed Tree Models and Applications},
  journal      = {CoRR},
  volume       = {abs/2602.16605},
  year         = {2026},
  url          = {https://doi.org/10.48550/arXiv.2602.16605},
  doi          = {10.48550/ARXIV.2602.16605},
  eprinttype   = {arXiv},
  eprint       = {2602.16605},
  bibsource    = {dblp computer science bibliography, https://dblp.org},
  note = {To appear at ICALP'26}
}

@inproceedings{KLP26,
  author       = {Miroslaw Kowaluk and
                  Andrzej Lingas and
                  Mia Persson},
  editor       = {Christos D. Zaroliagis and
                  Dinabandhu Bhandari and
                  Prosenjit Gupta and
                  Swagatam Das},
  title        = {Approximate All-Pairs Hamming Distances and 0-1 Matrix Multiplication},
  booktitle    = {Applied Algorithms - Third International Conference, {ICAA} 2026,
                  Kolkata, India, January 7-9, 2026, Proceedings},
  series       = {Lecture Notes in Computer Science},
  pages        = {38--49},
  publisher    = {Springer},
  year         = {2026},
  url          = {https://doi.org/10.1007/978-3-032-15621-1\_4},
  doi          = {10.1007/978-3-032-15621-1\_4},
  bibsource    = {dblp computer science bibliography, https://dblp.org}
}

@inproceedings{CS26,
  author       = {Jean Cardinal and
                  Micha Sharir},
  title        = {Implicit representations via the polynomial method},
  year         = {2026},
  url          = {https://doi.org/10.48550/arXiv.2602.10922},
  booktitle = {Proceedings of the 52nd International Workshop on Graph-Theoretic Concepts in Computer Science},
  note = {To appear}
}

@article {MR65617,
    AUTHOR = {Tam\'{a}s K\"{o}vari and Vera T. S\'os and P\'{a}l Tur\'an},
     TITLE = {On a problem of {K}. {Z}arankiewicz},
   JOURNAL = {Colloq. Math.},
  FJOURNAL = {Colloquium Mathematicum},
    VOLUME = {3},
      YEAR = {1954},
     PAGES = {50--57},
      ISSN = {0010-1354,1730-6302},
   MRCLASS = {27.2X},
  MRNUMBER = {65617},
MRREVIEWER = {J.\ Riguet},
       DOI = {10.4064/cm-3-1-50-57},
       URL = {https://doi.org/10.4064/cm-3-1-50-57},
}

@article {MR1452952,
    AUTHOR = {Erd\H{o}s, Paul and Pyber, L\'{a}szl\'{o}},
     TITLE = {Covering a graph by complete bipartite graphs},
   JOURNAL = {Discrete Math.},
  FJOURNAL = {Discrete Mathematics},
    VOLUME = {170},
      YEAR = {1997},
    NUMBER = {1-3},
     PAGES = {249--251},
      ISSN = {0012-365X,1872-681X},
   MRCLASS = {05C70},
  MRNUMBER = {1452952},
       DOI = {10.1016/S0012-365X(96)00124-0},
       URL = {https://doi.org/10.1016/S0012-365X(96)00124-0},
}

@article{L56,
author = {Oleg Lupanov},
title = {On rectifier and switching-and-rectifier circuits},
journal = {Doklady Academii nauk SSSR},
year = {1956}, 
volume = {111}, 
number = {6}, 
pages = {1171-–1174}
}

@article{N69,
author = {Eduard Ivanovich Nechiporuk},
title = {The topological principles of self-correction},
journal = {Problemy Kibernet.},
year = {1969}, 
volume = {21}, 
pages = {5--102},
note = {(In Russian)}
}

@article{DEGM05,
  author       = {Matthew Dickerson and
                  David Eppstein and
                  Michael T. Goodrich and
                  Jeremy Yu Meng},
  title        = {Confluent Drawings: Visualizing Non-planar Diagrams in a Planar Way},
  journal      = {J. Graph Algorithms Appl.},
  volume       = {9},
  number       = {1},
  pages        = {31--52},
  year         = {2005},
  url          = {https://doi.org/10.7155/jgaa.00099},
  doi          = {10.7155/JGAA.00099},
  bibsource    = {dblp computer science bibliography, https://dblp.org}
}

@article{BGKTW22,
  author       = {{\'{E}}douard Bonnet and
                  Colin Geniet and
                  Eun Jung Kim and
                  St{\'{e}}phan Thomass{\'{e}} and
                  R{\'{e}}mi Watrigant},
  title        = {Twin-width {II:} small classes},
  journal      = {Comb. Theory},
  volume       = {2},
  number       = {2},
  year         = {2022},
  url          = {https://doi.org/10.5070/c62257876},
  doi          = {10.5070/C62257876},
  bibsource    = {dblp computer science bibliography, https://dblp.org}
}

@article{BGKTW24,
  author       = {{\'{E}}douard Bonnet and
                  Colin Geniet and
                  Eun Jung Kim and
                  St{\'{e}}phan Thomass{\'{e}} and
                  R{\'{e}}mi Watrigant},
  title        = {Twin-Width {III:} Max Independent Set, Min Dominating Set, and Coloring},
  journal      = {{SIAM} J. Comput.},
  volume       = {53},
  number       = {5},
  pages        = {1602--1640},
  year         = {2024},
  url          = {https://doi.org/10.1137/21m142188x},
  doi          = {10.1137/21M142188X},
  bibsource    = {dblp computer science bibliography, https://dblp.org}
}

@article{BGMSTT24,
  author       = {{\'{E}}douard Bonnet and
                  Ugo Giocanti and
                  Patrice Ossona de Mendez and
                  Pierre Simon and
                  St{\'{e}}phan Thomass{\'{e}} and
                  Szymon Torunczyk},
  title        = {Twin-Width {IV:} Ordered Graphs and Matrices},
  journal      = {J. {ACM}},
  volume       = {71},
  number       = {3},
  pages        = {21},
  year         = {2024},
  url          = {https://doi.org/10.1145/3651151},
  doi          = {10.1145/3651151},
  bibsource    = {dblp computer science bibliography, https://dblp.org}
}

@inproceedings{T23,
  author       = {Szymon Toru\'{n}czyk},
  title        = {Flip-width: Cops and Robber on dense graphs},
  booktitle    = {64th {IEEE} Annual Symposium on Foundations of Computer Science, {FOCS}
                  2023, Santa Cruz, CA, USA, November 6-9, 2023},
  pages        = {663--700},
  publisher    = {{IEEE}},
  year         = {2023},
  url          = {https://doi.org/10.1109/FOCS57990.2023.00045},
  doi          = {10.1109/FOCS57990.2023.00045},
  bibsource    = {dblp computer science bibliography, https://dblp.org}
}

@inproceedings{DT25,
  author       = {Jan Dreier and
                  Szymon Toru\'{n}czyk},
  editor       = {Michal Kouck{\'{y}} and
                  Nikhil Bansal},
  title        = {Merge-Width and First-Order Model Checking},
  booktitle    = {Proceedings of the 57th Annual {ACM} Symposium on Theory of Computing,
                  {STOC} 2025, Prague, Czechia, June 23-27, 2025},
  pages        = {1944--1955},
  publisher    = {{ACM}},
  year         = {2025},
  url          = {https://doi.org/10.1145/3717823.3718259},
  doi          = {10.1145/3717823.3718259},
  bibsource    = {dblp computer science bibliography, https://dblp.org}
}

@article{KPSS25,
  author       = {Andrew Krapivin and
                  Benjamin Przybocki and
                  Nicol{\'{a}}s Sanhueza{-}Matamala and
                  Bernardo Subercaseaux},
  title        = {Optimal and Efficient Partite Decompositions of Hypergraphs},
  journal      = {CoRR},
  volume       = {abs/2511.11855},
  year         = {2025},
  url          = {https://doi.org/10.48550/arXiv.2511.11855},
  doi          = {10.48550/ARXIV.2511.11855},
  eprinttype    = {arXiv},
  eprint       = {2511.11855},
  bibsource    = {dblp computer science bibliography, https://dblp.org},
  note = {To appear in {STOC}'26}
}

@article{AvdBM25,
  author       = {Emile Anand and
                  Jan van den Brand and
                  Rose McCarty},
  title        = {The Structural Complexity of Matrix-Vector Multiplication},
  journal      = {CoRR},
  volume       = {abs/2502.21240},
  year         = {2025},
  url          = {https://doi.org/10.48550/arXiv.2502.21240},
  doi          = {10.48550/ARXIV.2502.21240},
  eprinttype    = {arXiv},
  eprint       = {2502.21240},
  bibsource    = {dblp computer science bibliography, https://dblp.org}
}

@incollection {MR4868423,
    AUTHOR = {Bonnet, \'Edouard and Duron, Julien and Sylvester, John and
              Zamaraev, Viktor},
     TITLE = {Adjacency labeling schemes for small classes},
 BOOKTITLE = {16th {I}nnovations in {T}heoretical {C}omputer {S}cience
              {C}onference},
    SERIES = {LIPIcs. Leibniz Int. Proc. Inform.},
    VOLUME = {325},
     PAGES = {Art. No. 21, 22},
 PUBLISHER = {Schloss Dagstuhl. Leibniz-Zent. Inform., Wadern},
      YEAR = {2025},
      ISBN = {978-3-95977-361-4},
   MRCLASS = {68R10},
  MRNUMBER = {4868423},
       DOI = {10.4230/lipics.itcs.2025.21},
       URL = {https://doi.org/10.4230/lipics.itcs.2025.21},
}

@incollection {MR4802148,
    AUTHOR = {Bonnet, \'Edouard and Duron, Julien and Sylvester, John and
              Zamaraev, Viktor},
     TITLE = {Symmetric-difference (degeneracy) and signed tree models},
 BOOKTITLE = {49th {I}nternational {S}ymposium on {M}athematical
              {F}oundations of {C}omputer {S}cience},
    SERIES = {LIPIcs. Leibniz Int. Proc. Inform.},
    VOLUME = {306},
     PAGES = {Art. No. 32, 16},
 PUBLISHER = {Schloss Dagstuhl. Leibniz-Zent. Inform., Wadern},
      YEAR = {2024},
      ISBN = {978-3-95977-335-5},
   MRCLASS = {68R10},
  MRNUMBER = {4802148},
       DOI = {10.4230/lipics.mfcs.2024.32},
       URL = {https://doi.org/10.4230/lipics.mfcs.2024.32},
}

@article {MR4623907,
    AUTHOR = {Bonnet, \'Edouard and Foucaud, Florent and Lehtil\"a, Tuomo
              and Parreau, Aline},
     TITLE = {Neighbourhood complexity of graphs of bounded twin-width},
   JOURNAL = {European J. Combin.},
  FJOURNAL = {European Journal of Combinatorics},
    VOLUME = {115},
      YEAR = {2024},
     PAGES = {Paper No. 103772, 8},
      ISSN = {0195-6698,1095-9971},
   MRCLASS = {05C85},
  MRNUMBER = {4623907},
MRREVIEWER = {Guantao\ Chen},
       DOI = {10.1016/j.ejc.2023.103772},
       URL = {https://doi.org/10.1016/j.ejc.2023.103772},
}

@article{BG25,
  author       = {Marthe Bonamy and
                  Colin Geniet},
  title        = {{\(\chi\)}-Boundedness and Neighbourhood Complexity of Bounded Merge-Width
                  Graphs},
  journal      = {CoRR},
  volume       = {abs/2504.08266},
  year         = {2025},
  url          = {https://doi.org/10.48550/arXiv.2504.08266},
  doi          = {10.48550/ARXIV.2504.08266},
  eprinttype    = {arXiv},
  eprint       = {2504.08266},
  bibsource    = {dblp computer science bibliography, https://dblp.org}
}

@inproceedings {MR1213461,
    AUTHOR = {Welzl, Emo},
     TITLE = {Partition trees for triangle counting and other range
              searching problems},
 BOOKTITLE = {Proceedings of the {F}ourth {A}nnual {S}ymposium on
              {C}omputational {G}eometry ({U}rbana, {IL}, 1988)},
     PAGES = {23--33},
 PUBLISHER = {ACM, New York},
      YEAR = {1988},
      ISBN = {0-89791-270-5},
   MRCLASS = {68U05 (68P10)},
  MRNUMBER = {1213461},
       DOI = {10.1145/73393.73397},
       URL = {https://doi.org/10.1145/73393.73397},
}

@article {MR1014739,
    AUTHOR = {Chazelle, Bernard and Welzl, Emo},
     TITLE = {Quasi-optimal range searching in spaces of finite
              {VC}-dimension},
   JOURNAL = {Discrete Comput. Geom.},
  FJOURNAL = {Discrete \& Computational Geometry. An International Journal
              of Mathematics and Computer Science},
    VOLUME = {4},
      YEAR = {1989},
    NUMBER = {5},
     PAGES = {467--489},
      ISSN = {0179-5376,1432-0444},
   MRCLASS = {68U05},
  MRNUMBER = {1014739},
       DOI = {10.1007/BF02187743},
       URL = {https://doi.org/10.1007/BF02187743},
}

@article {MR1313896,
    AUTHOR = {Haussler, David},
     TITLE = {Sphere packing numbers for subsets of the {B}oolean {$n$}-cube
              with bounded {V}apnik-{C}hervonenkis dimension},
   JOURNAL = {J. Combin. Theory Ser. A},
  FJOURNAL = {Journal of Combinatorial Theory. Series A},
    VOLUME = {69},
      YEAR = {1995},
    NUMBER = {2},
     PAGES = {217--232},
      ISSN = {0097-3165,1096-0899},
   MRCLASS = {52C17 (60F17)},
  MRNUMBER = {1313896},
MRREVIEWER = {Evarist\ Gin\'e},
       DOI = {10.1016/0097-3165(95)90052-7},
       URL = {https://doi.org/10.1016/0097-3165(95)90052-7},
}

@article {MR1262921,
    AUTHOR = {Matou\v{s}ek, Ji\v{r}\'{i} and Welzl, Emo and Wernisch, Lorenz},
     TITLE = {Discrepancy and approximations for bounded {VC}-dimension},
   JOURNAL = {Combinatorica},
  FJOURNAL = {Combinatorica. An International Journal on Combinatorics and
              the Theory of Computing},
    VOLUME = {13},
      YEAR = {1993},
    NUMBER = {4},
     PAGES = {455--466},
      ISSN = {0209-9683},
   MRCLASS = {52A37 (05A05)},
  MRNUMBER = {1262921},
MRREVIEWER = {B\'ela\ Uhrin},
       DOI = {10.1007/BF01303517},
       URL = {https://doi.org/10.1007/BF01303517},
}

@book {MR2683232,
    AUTHOR = {Matou\v{s}ek, Ji\v{r}\'{i}},
     TITLE = {Geometric discrepancy},
    SERIES = {Algorithms and Combinatorics},
    VOLUME = {18},
      NOTE = {An illustrated guide,
              Revised paperback reprint of the 1999 original},
 PUBLISHER = {Springer-Verlag, Berlin},
      YEAR = {2010},
     PAGES = {xiv+296},
      ISBN = {978-3-642-03941-6},
   MRCLASS = {11K38 (05D05 52C99 65D18)},
  MRNUMBER = {2683232},
       DOI = {10.1007/978-3-642-03942-3},
       URL = {https://doi.org/10.1007/978-3-642-03942-3},
}

@article {MR3646875,
    AUTHOR = {Fox, Jacob and Pach, J\'anos and Sheffer, Adam and Suk, Andrew
              and Zahl, Joshua},
     TITLE = {A semi-algebraic version of {Z}arankiewicz's problem},
   JOURNAL = {J. Eur. Math. Soc. (JEMS)},
  FJOURNAL = {Journal of the European Mathematical Society (JEMS)},
    VOLUME = {19},
      YEAR = {2017},
    NUMBER = {6},
     PAGES = {1785--1810},
      ISSN = {1435-9855,1435-9863},
   MRCLASS = {05C35 (14P10 52C10 68R10)},
  MRNUMBER = {3646875},
MRREVIEWER = {J.\ C.\ Lagarias},
       DOI = {10.4171/JEMS/705},
}

@inproceedings{CCCK24,
  author       = {Sergio Cabello and
                  Siu{-}Wing Cheng and
                  Otfried Cheong and
                  Christian Knauer},
  title        = {Geometric Matching and Bottleneck Problems},
  booktitle    = {40th International Symposium on Computational Geometry, SoCG 2024,
                  June 11-14, 2024, Athens, Greece},
  pages        = {31:1--31:15},
  year         = {2024},
  doi          = {10.4230/LIPICS.SOCG.2024.31},
}

@incollection {MR820214,
    AUTHOR = {Chung, Fan R. K. and Erd\H{o}s, Paul and Spencer, Joel},
     TITLE = {On the decomposition of graphs into complete bipartite
              subgraphs},
 BOOKTITLE = {Studies in pure mathematics},
     PAGES = {95--101},
 PUBLISHER = {Birkh\"auser, Basel},
      YEAR = {1983},
      ISBN = {3-7643-1288-2},
   MRCLASS = {05C35 (05C80)},
  MRNUMBER = {820214},
MRREVIEWER = {Micha\l\ Karo\'nski},
doi = {https://doi.org/10.1007/978-3-0348-5438-2_10},
}

@article {MR4013919,
    AUTHOR = {Do, Thao},
     TITLE = {Representation complexities of semialgebraic graphs},
   JOURNAL = {SIAM J. Discrete Math.},
  FJOURNAL = {SIAM Journal on Discrete Mathematics},
    VOLUME = {33},
      YEAR = {2019},
    NUMBER = {4},
     PAGES = {1864--1877},
      ISSN = {0895-4801,1095-7146},
   MRCLASS = {05C62 (05C65 05C70 14P10 52C10)},
  MRNUMBER = {4013919},
MRREVIEWER = {Jen\H o\ Lehel},
       DOI = {10.1137/18M1221606},
}

@article{AAEKS25,
  author       = {Pankaj K. Agarwal and
                  Boris Aronov and
                  Esther Ezra and
                  Matthew J. Katz and
                  Micha Sharir},
  title        = {Intersection Queries for Flat Semi-Algebraic Objects in Three Dimensions
                  and Related Problems},
  journal      = {{ACM} Trans. Algorithms},
  volume       = {21},
  number       = {3},
  pages        = {25:1--25:59},
  year         = {2025},
  url          = {https://doi.org/10.1145/3721290},
  doi          = {10.1145/3721290},
  bibsource    = {dblp computer science bibliography, https://dblp.org}
}

@article {FM95,
    AUTHOR = {Feder, Tom\'as and Motwani, Rajeev},
     TITLE = {Clique partitions, graph compression and speeding-up
              algorithms},
   JOURNAL = {J. Comput. System Sci.},
  FJOURNAL = {Journal of Computer and System Sciences},
    VOLUME = {51},
      YEAR = {1995},
    NUMBER = {2},
     PAGES = {261--272},
      ISSN = {0022-0000,1090-2724},
   MRCLASS = {68R10 (68Q20)},
  MRNUMBER = {1356505},
       DOI = {10.1006/jcss.1995.1065},
}

@article{CT22,
  author       = {Jonathan B. Conroy and
                  Csaba D. T{\'{o}}th},
  title        = {Hop-spanners for geometric intersection graphs},
  journal      = {J. Comput. Geom.},
  volume       = {14},
  number       = {2},
  pages        = {26--64},
  year         = {2022},
  doi          = {10.20382/JOCG.V14I2A3},
  bibsource    = {dblp computer science bibliography, https://dblp.org}
}

@article{MR1298916,
    AUTHOR = {Agarwal, Pankaj K. and Alon, Noga and Aronov, Boris and Suri, Subash},
     TITLE = {Can visibility graphs be represented compactly?},
   JOURNAL = {Discrete Comput. Geom.},
  FJOURNAL = {Discrete \& Computational Geometry. An International Journal
              of Mathematics and Computer Science},
    VOLUME = {12},
      YEAR = {1994},
    NUMBER = {3},
     PAGES = {347--365},
      ISSN = {0179-5376,1432-0444},
   MRCLASS = {68U05 (05C75)},
  MRNUMBER = {1298916},
MRREVIEWER = {Igor\ Rivin},
       DOI = {10.1007/BF02574385},
}

@article{BKTW21,
  title={Twin-width {I}: tractable FO model checking},
  author={Bonnet, {\'E}douard and Kim, Eun Jung and Thomass{\'e}, St{\'e}phan and Watrigant, R{\'e}mi},
  journal={ACM Journal of the ACM (JACM)},
  volume={69},
  number={1},
  pages={1--46},
  year={2021},
  publisher={ACM New York, NY},
  doi          = {10.1145/3486655},
}

@article {MR1417348,
    AUTHOR = {Koll\'ar, J\'anos and R\'onyai, Lajos and Szab\'o, Tibor},
     TITLE = {Norm-graphs and bipartite {T}ur\'an numbers},
   JOURNAL = {Combinatorica},
  FJOURNAL = {Combinatorica. An International Journal on Combinatorics and
              the Theory of Computing},
    VOLUME = {16},
      YEAR = {1996},
    NUMBER = {3},
     PAGES = {399--406},
      ISSN = {0209-9683},
   MRCLASS = {05C35},
  MRNUMBER = {1417348},
MRREVIEWER = {W.\ G.\ Brown},
       DOI = {10.1007/BF01261323},
       URL = {https://doi.org/10.1007/BF01261323},
}

@article{S24,
  title={A survey of {Z}arankiewicz problems in geometry},
  author={Smorodinsky, Shakhar},
  journal={CoRR},
  volume = {abs/2410.03702},
  year={2024},
  doi = {https://doi.org/10.48550/arXiv.2410.03702},
}

@inproceedings{CY25,
  author       = {Jean Cardinal and
                  Yelena Yuditsky},
  editor       = {Anne Benoit and
                  Haim Kaplan and
                  Sebastian Wild and
                  Grzegorz Herman},
  title        = {Compact Representation of Semilinear and Terrain-Like Graphs},
  booktitle    = {33rd Annual European Symposium on Algorithms, {ESA} 2025, Warsaw,
                  Poland, September 15-17, 2025},
  series       = {LIPIcs},
  pages        = {67:1--67:19},
  publisher    = {Schloss Dagstuhl - Leibniz-Zentrum f{\"{u}}r Informatik},
  year         = {2025},
  url          = {https://doi.org/10.4230/LIPIcs.ESA.2025.67},
  doi          = {10.4230/LIPICS.ESA.2025.67},
  bibsource    = {dblp computer science bibliography, https://dblp.org}
}

@InProceedings{BGOdMT23,
  author =	{Bonnet, \'{E}douard and Giocanti, Ugo and Ossona de Mendez, Patrice and Thomass\'{e}, St\'{e}phan},
  title =	{{Twin-Width V: Linear Minors, Modular Counting, and Matrix Multiplication}},
  booktitle =	{40th International Symposium on Theoretical Aspects of Computer Science (STACS 2023)},
  pages =	{15:1--15:16},
  series =	{Leibniz International Proceedings in Informatics (LIPIcs)},
  ISBN =	{978-3-95977-266-2},
  ISSN =	{1868-8969},
  year =	{2023},
  volume =	{254},
  editor =	{Berenbrink, Petra and Bouyer, Patricia and Dawar, Anuj and Kant\'{e}, Mamadou Moustapha},
  publisher =	{Schloss Dagstuhl -- Leibniz-Zentrum f{\"u}r Informatik},
  address =	{Dagstuhl, Germany},
  URL =		{https://drops.dagstuhl.de/entities/document/10.4230/LIPIcs.STACS.2023.15},
  URN =		{urn:nbn:de:0030-drops-176675},
  doi =		{10.4230/LIPIcs.STACS.2023.15}
}

@article{KO26,
      title={Fast and simple multiplication of bounded twin-width matrices}, 
      author={László Kozma and Michal Opler},
      year={2026},
      eprint={2602.20023},
      archivePrefix={arXiv},
      primaryClass={cs.DS},
      url={https://arxiv.org/abs/2602.20023}, 
  journal      = {CoRR},
  volume       = {abs/2602.20023},
}

@article {Sauer1972,
    AUTHOR = {Sauer, Norbert W.},
     TITLE = {On the density of families of sets},
   JOURNAL = {J. Combinatorial Theory Ser. A},
  FJOURNAL = {Journal of Combinatorial Theory. Series A},
    VOLUME = {13},
      YEAR = {1972},
     PAGES = {145--147},
      ISSN = {0097-3165},
       DOI = {10.1016/0097-3165(72)90019-2},
       URL = {https://doi.org/10.1016/0097-3165(72)90019-2},
}

@article {Shelah1972,
    AUTHOR = {Shelah, Saharon},
     TITLE = {A combinatorial problem; stability and order for models and
              theories in infinitary languages},
   JOURNAL = {Pacific J. Math.},
  FJOURNAL = {Pacific Journal of Mathematics},
    VOLUME = {41},
      YEAR = {1972},
     PAGES = {247--261},
      ISSN = {0030-8730,1945-5844},
       URL = {http://projecteuclid.org/euclid.pjm/1102968432},
}

@article{OneWayQC,
  title = {A One-Way Quantum Computer},
  author = {Raussendorf, Robert and Briegel, Hans J.},
  journal = {Phys. Rev. Lett.},
  volume = {86},
  issue = {22},
  pages = {5188--5191},
  numpages = {0},
  year = {2001},
  month = {May},
  publisher = {American Physical Society},
  doi = {10.1103/PhysRevLett.86.5188},
  url = {https://link.aps.org/doi/10.1103/PhysRevLett.86.5188}
}

@article{transformingStates,
	author = {Dahlberg, A.  and Wehner, S.},
	title = {Transforming graph states using single-qubit operations},
	journal = {Philosophical Transactions of the Royal Society A: Mathematical, Physical and Engineering Sciences},
	volume = {376},
	number = {2123},
	pages = {20170325},
	year = {2018},
	doi = {10.1098/rsta.2017.0325},
	URL = {https://royalsocietypublishing.org/doi/abs/10.1098/rsta.2017.0325},
	eprint = {https://royalsocietypublishing.org/doi/pdf/10.1098/rsta.2017.0325}
}

@article{graphStatesSecretSharing,
  title = {Graph states for quantum secret sharing},
  author = {Markham, Damian and Sanders, Barry C.},
  journal = {Phys. Rev. A},
  volume = {78},
  issue = {4},
  pages = {042309},
  numpages = {17},
  year = {2008},
  month = {Oct},
  publisher = {American Physical Society},
  doi = {10.1103/PhysRevA.78.042309},
  url = {https://link.aps.org/doi/10.1103/PhysRevA.78.042309}
}

@article{quantumNetworkRoutingAndLC,
    author = {Frederik Hahn  and Anna Pappa and Jens Eisert},
    title = {Quantum network routing and local complementation},
    journal = {npj Quantum Information},
    volume = {5},
    number = {76},
    year = {2019},
    doi = {10.1038/s41534-019-0191-6},
    URL = {https://doi.org/10.1038/s41534-019-0191-6}
}

@article{CliffordErrorCorrection,
  title = {Theory of fault-tolerant quantum computation},
  author = {Gottesman, Daniel},
  journal = {Phys. Rev. A},
  volume = {57},
  issue = {1},
  pages = {127--137},
  numpages = {0},
  year = {1998},
  month = {Jan},
  publisher = {American Physical Society},
  doi = {10.1103/PhysRevA.57.127},
  url = {https://link.aps.org/doi/10.1103/PhysRevA.57.127}
}

@article{optimalSynthesis,
    author = {Patel, Ketan N. and Markov, Igor L. and Hayes, John P.},
    title = {Optimal synthesis of linear reversible circuits},
    year = {2008},
    issue_date = {March 2008},
    publisher = {Rinton Press, Incorporated},
    address = {Paramus, NJ},
    volume = {8},
    number = {3},
    issn = {1533-7146},
    journal = {Quantum Info. Comput.},
    month = mar,
    pages = {282–294},
    numpages = {13}
}

@ARTICLE{circuitLowerBound,
  author={Shende, Vivek V. and Prasad, Aditya K. and Markov, Igor L. and Hayes, John P.},
  journal={IEEE Transactions on Computer-Aided Design of Integrated Circuits and Systems}, 
  title={Synthesis of reversible logic circuits}, 
  year={2003},
  volume={22},
  number={6},
  pages={710-722},
  doi={10.1109/TCAD.2003.811448}
}

@article{DJ25,
  author       = {James Davies and Andrew Jena},
  title        = {Preparing graph states forbidding a vertex-minor},
  journal      = {CoRR},
  volume       = {abs/2504.00291v2},
  year         = {2025},
  url          = {https://doi.org/10.48550/arXiv.2504.00291},
  doi          = {10.48550/arXiv.2504.00291},
  eprinttype   = {arXiv},
  eprint       = {2504.00291v2},
}

@article{KMY25,
  author       = {Soh Kumabe and Ryuhei Mori and Yusei Yoshimura},
  title        = {Complexity of graph-state preparation by {C}lifford circuits},
  journal      = {CoRR},
  volume       = {abs/2402.05874
v3},
  year         = {2025},
  url          = {https://doi.org/10.48550/arXiv.2402.05874},
  doi          = {10.48550/arXiv.2402.05874},
  eprinttype   = {arXiv},
  eprint       = {2402.05874v3},
}

@incollection {graphStateEntanglement,
    AUTHOR = {Marc Hein and Wolfgang Dur and Jens Eisert and Robert Raussendorf and Maarten Van den Nest and Hans J. Briegel.},
     TITLE = {Entanglement in Graph States and its Applications},
 BOOKTITLE = {Quantum Computers, Algorithms and Chaos},
    SERIES = {ProQuest Ebook Central},
    VOLUME = {162},
     PAGES = {115--218},
 PUBLISHER = {Sage Publications Ltd.},
      YEAR = {2006},
      ISBN = {9781614990185}
}

@article{OtherDistributing,
  title = {Distributing graph states over arbitrary quantum networks},
  author = {Meignant, Cl\'ement and Markham, Damian and Grosshans, Fr\'ed\'eric},
  journal = {Phys. Rev. A},
  volume = {100},
  issue = {5},
  pages = {052333},
  year = {2019},
  month = {Nov},
  publisher = {American Physical Society},
  doi = {10.1103/PhysRevA.100.052333},
  url = {https://link.aps.org/doi/10.1103/PhysRevA.100.052333}
}

@article{improvedSimulation04,
  title = {Improved simulation of stabilizer circuits},
  author = {Aaronson, Scott and Gottesman, Daniel},
  journal = {Phys. Rev. A},
  volume = {70},
  issue = {5},
  pages = {052328},
  numpages = {14},
  year = {2004},
  month = {Nov},
  publisher = {American Physical Society},
  doi = {10.1103/PhysRevA.70.052328},
  url = {https://link.aps.org/doi/10.1103/PhysRevA.70.052328}
}

@article{fastSimulationGS06,
  title = {Fast simulation of stabilizer circuits using a graph-state representation},
  author = {Anders, Simon and Briegel, Hans J.},
  journal = {Phys. Rev. A},
  volume = {73},
  issue = {2},
  pages = {022334},
  numpages = {9},
  year = {2006},
  month = {Feb},
  publisher = {American Physical Society},
  doi = {10.1103/PhysRevA.73.022334},
  url = {https://link.aps.org/doi/10.1103/PhysRevA.73.022334}
}

@article{Gottesman98,
  author       = {Daniel Gottesman},
  title        = {The Heisenberg Representation of Quantum Computers},
  journal      = {CoRR},
  volume       = {abs/quant-ph/9807006},
  year         = {1998},
  url          = {https://doi.org/10.48550/arXiv.quant-ph/9807006},
  doi          = {10.48550/arXiv.quant-ph/9807006},
  eprinttype   = {arXiv},
  eprint       = {9807006},
}

@article{LCEquiv,
  title = {Graphical description of the action of local {C}lifford transformations on graph states},
  author = {Van den Nest{}, Maarten and Dehaene, Jeroen and De Moor, Bart},
  journal = {Phys. Rev. A},
  volume = {69},
  issue = {2},
  pages = {022316},
  numpages = {7},
  year = {2004},
  month = {Feb},
  publisher = {American Physical Society},
  doi = {10.1103/PhysRevA.69.022316},
  url = {https://link.aps.org/doi/10.1103/PhysRevA.69.022316}
}

@article {RWAndVM,
    AUTHOR = {Oum, Sang{-}il},
     TITLE = {Rank-width and vertex-minors},
   JOURNAL = {J. Combin. Theory Ser. B},
  FJOURNAL = {Journal of Combinatorial Theory. Series B},
    VOLUME = {95},
      YEAR = {2005},
    NUMBER = {1},
     PAGES = {79--100},
      ISSN = {0095-8956},
   MRCLASS = {05C83 (05B35)},
  MRNUMBER = {2156341},
MRREVIEWER = {Maruti M. Shikare},
       DOI = {10.1016/j.jctb.2005.03.003},
}

@article{CLXLLH25,
  author       = {Kai Chen and Wen Liu and GuoSheng Xu and Yangzhi Li and Maoduo Li and Shouli He},
  title        = {Quantum Circuit Optimization Based on Dynamic Grouping and {ZX}-Calculus for Reducing 2-Qubit Gate Count},
  journal      = {CoRR},
  volume       = {abs/2507.14434},
  year         = {2025},
  url          = {https://doi.org/10.48550/arXiv.2507.14434},
  doi          = {10.48550/arXiv.2507.14434},
  eprinttype   = {arXiv},
  eprint       = {2507.14434},
}

@article{JWFL26,
  author       = {Tingxiang Ji and Hansika Weerasena and Demitry Farfurnik and Jianqing Liu},
  title        = {Towards Efficient Synthesis of Quantum Graph States by Fusing Graph Motifs},
  journal      = {CoRR},
  volume       = {abs/2606.02880},
  year         = {2026},
  url          = {https://doi.org/10.48550/arXiv.2606.02880},
  doi          = {10.48550/arXiv.2606.02880},
  eprinttype   = {arXiv},
  eprint       = {2606.02880},
}

@article{NGClifford2024,
  title = {Twisty-puzzle-inspired approach to {C}lifford synthesis},
  author = {Bao, Ning and Hartnett, Gavin S.},
  journal = {Phys. Rev. A},
  volume = {109},
  issue = {3},
  pages = {032409},
  numpages = {10},
  year = {2024},
  month = {Mar},
  publisher = {American Physical Society},
  doi = {10.1103/PhysRevA.109.032409},
  url = {https://link.aps.org/doi/10.1103/PhysRevA.109.032409}
}

@article{minimizingLCEquiv,
  author = {Hemant Sharma and Kenneth Goodenough and Johannes Borregaard and Filip Rozpędek and Jonas Helsen},
  title = {{M}inimising the number of edges in {LC}-equivalent graph states},
  year = {2026},
  eprint = {2506.00292},
  journal = {Quantum},
  url = {https://doi.org/10.22331/q-2026-02-09-2001},
  volume = {10}
}

@article{alon1999norm,
  title={Norm-graphs: variations and applications},
  author={Alon, Noga and R{\'o}nyai, Lajos and Szab{\'o}, Tibor},
  journal={Journal of Combinatorial Theory, Series B},
  volume={76},
  number={2},
  pages={280--290},
  year={1999},
  publisher={Elsevier}
}

@book{Shelah1982Book,
    author = {Shelah{}, Saharon},
    address = {Amsterdam},
    booktitle = {Classification theory and the number of non-isomorphic models},
    isbn = {0720407575},
    language = {eng},
    lccn = {77015646},
    year = {1978},
    publisher = {North-Holland Pub. Co.},
    series = {Studies in logic and the foundations of mathematics v. 92},
    title = {Classification theory and the number of non-isomorphic models},
}

@article{monDep26,
  author       = {Jan Dreier and Nikolas M{\"a}hlmann and Rose McCarty and Michał Pilipczuk and Szymon Toruńczyk},
  title        = {Neighborhood Complexity and Radius-1 Merge-Width in Monadically Dependent Graph Classes},
  journal      = {in preparation},
  year         = {2026}
}

\end{document}